\documentclass[10pt,reqno]{amsart}
\usepackage[a4paper,top=3cm, bottom=3cm,left=3cm, right=3cm,twoside]{geometry}


\usepackage{amsthm}
 \usepackage{amsmath}
 \usepackage{amsfonts}
 \usepackage{amssymb}
 \usepackage{amscd}
 \usepackage{url}
 \usepackage[T1]{fontenc}
 \usepackage[utf8x]{inputenc}
 \usepackage[english]{babel}
 \usepackage[mathscr]{eucal}
 \usepackage{lmodern}
 \usepackage{bera}
 \usepackage{float} 
 \usepackage{bold-extra}
 \usepackage{textcomp}
 \usepackage{graphicx}
 \usepackage{caption}
\usepackage{subcaption} 
 \usepackage[dvipsnames,table]{xcolor}
 \usepackage{mathtools} 
 \usepackage{enumitem}
 \usepackage{tikz} 
 \usepackage{tikz-cd}
 \usepackage{wasysym}
 \usepackage{todonotes}
 \usepackage{fancybox}
 \usepackage{fancyhdr}
 \usepackage[nodisplayskipstretch]{setspace}
\usepackage{hyperref}


 \theoremstyle{definition}  
  \newtheorem{defn}{Definition}[section]
  \newtheorem{eg}[defn]{Example}

   \newtheorem{rmk}[defn]{Remark}

  \theoremstyle{plain}  
  \newtheorem{thm}[defn]{Theorem}
  \newtheorem{lem}[defn]{Lemma}
  \newtheorem{prop}[defn]{Proposition}
  \newtheorem{cor}[defn]{Corollary}

  \theoremstyle{remark}


 \renewcommand{\it}[1]{\textit{#1}}
 
 \renewcommand{\sf}[1]{\textsf{#1}}

 \newcommand{\mbb}[1]{\mathbb{#1}}
 \newcommand{\mcl}[1]{\mathcal{#1}}

 \newcommand{\msc}[1]{\mathscr{#1}}

 \newcommand{\ol}[1]{\overline{#1}}
 
 \newcommand{\wtilde}[1]{\widetilde{#1}}

 \newcommand{\abs}[1]{\left\lvert#1\right\rvert}

 \newcommand{\norm}[1]{\left\lVert#1\right\rVert}
 \newcommand{\bnorm}[1]{\bigl\lVert#1\bigr\rVert}

 \newcommand{\M}[1]{\mbb{M}_{#1}}

 \newcommand{\B}[1]{\msc{B}({#1})}    
 
 \newcommand{\TC}[1]{\msc{T}({#1})}
 \newcommand{\ST}[1]{\mcl{S}({#1})}
 \newcommand{\ranko}[2]{|{#1}\rangle\langle{#2}|}

 \newcommand{\ip}[1]{\langle#1\rangle}
 \newcommand{\bip}[1]{\bigl\langle#1\bigr\rangle}
 \newcommand{\Bip}[1]{\Bigl\langle#1\Bigr\rangle}
 \newcommand{\ran}[1]{\sf{range}(#1)}

 \newcommand{\mscriptsize}[1]{{\setlength{\arraycolsep}{.3ex}\text{\scriptsize$#1$}}}

 \newcommand{\Matrix}[1]{\begin{bmatrix}#1\end{bmatrix}}
 \newcommand{\sMatrix}[1]{\mscriptsize{\Matrix{#1}}}

 \DeclareMathOperator{\T}{\sf{T}}
 \DeclareMathOperator{\tr}{\sf{tr}}
 
 \DeclareMathOperator{\id}{\sf{id}}

 \DeclareMathOperator{\lspan}{\sf{span}}
 \DeclareMathOperator{\cspan}{\ol{\lspan}}

 \DeclareMathOperator*{\ssum}{\overline{\sum}}

 \numberwithin{equation}{section}
 \allowdisplaybreaks[4] 
 \setlist[enumerate]{font=\upshape,noitemsep, topsep=0pt} 
 \setlist[itemize]{noitemsep, topsep=0pt}

\makeatletter
\@namedef{subjclassname@2020}{\textup{2020} Mathematics Subject Classification}
\makeatother

\linespread{1.25}

\title{Degradable Strongly Entanglement Breaking Maps}

\author{Repana Devendra}
\address{Indian Institute of Technology Madras, Department of Mathematics, Chennai, Tamilnadu 600036, India}
\email{r.deva1992@gmail.com, ma16d020@smail.iitm.ac.in}

\author{Gunjan sapra}
\address{Indian Institute of Technology Madras, Department of Mathematics, Chennai, Tamilnadu 600036, India}
\email{gunjan.sapra27@gmail.com}

\author{K. Sumesh}
\address{Indian Institute of Technology Madras, Department of Mathematics, Chennai, Tamilnadu 600036, India}
\email{sumeshkpl@gmail.com, sumeshkpl@iitm.ac.in}

\begin{document}

\date{\today}

\begin{abstract}
 In this paper, we provide a structure theorem and various characterizations of degradable strongly entanglement breaking maps on separable Hilbert spaces. In the finite dimensional case,  we prove that unital degradable  entanglement breaking maps are precisely the $C^*$-extreme points of the convex set of unital entanglement breaking maps on  matrix algebras. Consequently, we get a structure for unital degradable positive partial transpose (PPT-) maps.
\end{abstract}

\keywords{Completely positive maps, Stinespring representation, Kraus operators, trace-class operators, quantum operations, separability, entanglement, complementary maps,  degradable maps, $C^*$-extreme points.}

\subjclass[2020]{47L05, 15B48, 47L90, 81P40, 81P47}

\maketitle


\section{Introduction}
  
 In \cite{DeSh05}, Devtak and Shor introduced the notion of degradable channels  via complementary channels.  Holevo (\cite{Hol07}) used complementary channels to  develop a class of  channels for which the multiplicativity/additivity problem (c.f.\cite{NiCh11}) has a positive solution.  Cubitt et al. (\cite{CRS08}) characterized all degradable qubit channels. In  \cite{WoPe07}, M.M Wolf et  al. introduced the notion of twisted diagonal channels and gave a simple criterion to check the degradability. A subclass of degradable channels, called  self-complementary channels, was studied in \cite{SRZ16}. It is known  that entanglement breaking maps (on matrix algebras) are anti-degradable maps but the converse is not true in general (c.f. \cite{DeSh05}, \cite{CRS08}). In this article, we investigate the mathematical structure of degradable (strongly) entanglement breaking maps and their connections with the notion of $C^*$-extreme points of unital entanglement breaking maps (\cite{BDMS23}) on matrix algebras. 
   
 This article is organized as follows. Section 2 contains basic definitions and theorems helpful in understanding the main results.  We give elaborated details about complementary and degradable maps as we could not find some of the mathematical details from the literature. In section 3, we present various characterizations (Theorem \ref{thm-DEB-char}) of degradable strongly entanglement breaking (SEB) maps  on Hilbert spaces (finite or infinite dimensions), which includes a structure theorem. Theorem \ref{thm-DEB-char} also answers the question of when a map complementary to a SEB-map is SEB. Theorem \ref{thm-DEB-char} and \cite[Corollary 7]{MuSi22} together characterizes all degradable PPT-maps on matrix algebras (See Remark \ref{rmk-degr-PPT}). In Theorem \ref{thm-Schur-SEB-antideg}, we provide a class of positive maps for which the notions of antidegradabilty and strongly entanglement breaking coincides.   We concentrate on the finite-dimensional case in Section 4 and prove (Theorem \ref{thm-degr-Cstar-ext}) that the class of unital degradable entanglement breaking maps coincide with the $C^*$-extreme points of all unital entanglement breaking maps introduced by Bhat et al. \cite{BDMS23}, and they are in turn adjoints of extreme QC-channels (\cite{HSR03}). In Theorem \ref{thm-PPT-proj}, we characterize PPT-channels for which the associated Choi-matrix is a projection and their degradability property. Finally, we consider degradable completely positive maps too.  Though $C^*$-extreme points of unital completely positive maps are degradable, we see  in Example \ref{eg-counter-CP-extr-degr} that the converse is not generally true.

\section{Preliminaries}

 Through out $\mcl{H}$ and $\mcl{K}$ (possibly with suffices) denote separable (finite or infinite dimensional)  complex Hilbert spaces with inner products linear in the second variable and conjugate linear in the first variable. We let $\B{\mcl{H},\mcl{K}}$ denotes the space of all bounded linear operators from $\mcl{H}$ to $\mcl{K}$, and $\B{\mcl{H}}^+$ denotes the cone of all positive operators in $\B{\mcl{H}}:=\B{\mcl{H},\mcl{H}}$. Recall that $A\in\B{\mcl{H}}^+$ if and only if $A=B^*B$ for some $B\in\B{\mcl{H}}$ if and only if $\ip{x,Ax}\geq 0$ for all $x\in\mcl{H}$. We let $\TC{\mcl{H}}$ denotes the space of all trace-class operators on $\mcl{H}$, which is a Banach space w.r.t the trace-norm given by $\norm{T}_{1}:=\tr(\abs{T})$, where $\tr(\abs{T})$ is the trace of $\abs{T}:=(T^*T)^{\frac{1}{2}}$  for all $T\in\TC{\mcl{H}}$. The space $(\B{\mcl{H}}, \norm{.})$ is isometrically isomorphic to the dual space of $(\TC{\mcl{H}}, \norm{.}_1)$ via the map $A\mapsto \phi_{A}$, where $\phi_{A}(T):=\tr(AT)$ for all $T\in\TC{\mcl{H}}$ and $A\in\B{\mcl{H}}$. 
 The elements of 
 \begin{align*}
   \ST{\mcl{H}}  :=\{\rho \in \TC{\mcl{H}}: \rho \in\B{\mcl{H}}^+, \norm{\rho}_1 = 1\}  
 \end{align*}
 are called \it{states} on $\mcl{H}$. A state $\rho \in \ST{\mcl{H}_{1}\otimes\mcl{H}_{2}}$ is said to be
 \begin{enumerate}[label=(\roman*)]
     \item \it{separable} (\cite{Wer89}) if $\rho$ belongs to the closure (w.r.t $\norm{\cdot}_1$) of the convex hull of the set  
     $$\{\rho^{(1)}\otimes \rho^{(2)}: \rho^{(j)}\in\ST{\mcl{H}_j}, j=1,2\};$$
     \item \it{countably decomposable separable} (\cite{KSW05}) if there exist states $\rho_n^{(j)}\in\ST{\mcl{H}_j},j=1,2$ such that\footnote{Through out this article we let $\Lambda\subseteq\mbb{N}$ denotes an index set. Given a family $\{A_j\}_{j\in\Lambda}$ of bounded linear operators on a Hilbert space $\ssum_{j\in\Lambda}A_j$ and $\sum_{j\in\Lambda}A_j$ denote the limit of the series in the strong operator topology (SOT) and trace-norm topology, respectively.}
     \begin{align}\label{eq-CD-sep}
         \rho = \sum_{n=1}^{N}p_n\rho_n^{(1)} \otimes \rho_n^{(2)},
     \end{align}
     where $p_n\in[0,1]$ with $\sum_np_n=1$ and $N\in\mbb{N}\cup\{\infty\}$. 
 \end{enumerate}
Note that, by using spectral theorem for compact positive operators, we can replace states by rank-one states in the definition of separability and countably decomposable separability. Every countably decomposable separable state is separable, but the converse is not true in general (\cite{KSW05}). If both $\mcl{H}_1$ and $\mcl{H}_2$ are finite-dimensional spaces, then separability and countably decomposable separability coincide; in such cases $N<\infty$ in \ref{eq-CD-sep} (c.f. \cite{ChDo13}). We say that a positive operator $T\in\TC{\mcl{H}_1\otimes\mcl{H}_2}$ is separable (resp, countably decomposable separable) if $\frac{1}{\tr(T)}T\in\ST{\mcl{H}_1\otimes\mcl{H}_2}$ is separable (resp, countably decomposable separable).

 A linear map $\Phi:\B{\mcl{H}_1}\to\B{\mcl{H}_2}$ is said to be \it{positive} if $\Phi(\B{\mcl{H}_1}^+)\subseteq\B{\mcl{H}_2}^+$; \it{completely positive} (CP)  if the map $\id_k \otimes \Phi:\M{k}\otimes \B{\mcl{H}_{1}}\to \M{k}\otimes \B{\mcl{H}_{2}}$ is positive for all $k\in\mbb{N}$, where $\id_k$ denotes the identity map on $\M{k}=\B{\mbb{C}^k}$; \it{co-completely positive} (co-CP) if the map $\T \otimes \Phi:\M{k}\otimes \B{\mcl{H}_{1}}\to \M{k}\otimes \B{\mcl{H}_{2}}$ is positive for all  $k \in \mbb{N}$, where $\T$ denotes the transpose map on $\M{k}$. If $\Phi$ is both CP and co-CP, then it is called a \it{positive partial transpose} (PPT) map. Note that, given $V\in\B{\mcl{H}_2,\mcl{H}_1}$, the map $\mathrm{Ad_V}:\B{\mcl{H}_1}\to\B{\mcl{H}_2}$ defined by $\mathrm{Ad}_V(X):=V^*XV$ is a normal (i.e., ultra weak continuous) CP-map.   It is well-known (\cite{Sti55},\cite{Kra71}) that given a positive linear map $\Phi:\B{\mcl{H}_1}\to\B{\mcl{H}_2}$ the following assertions are equivalent:
 \begin{enumerate}[label=(\roman*)]
    \item $\Phi$ is a normal CP-map. 
    \item (Kraus decomposition:) There exist a countable family $\{A_j\}_{j\in \Lambda}\subseteq\B{\mcl{H}_2,\mcl{H}_1}$, called Kraus operators, such that $\ssum_{j\in\Lambda}A_j^*A_j\in\B{\mcl{H}_2}$  and
    \begin{align}\label{eq-Kraus-decomp-CP}
        \Phi(X)=\ssum_{j\in \Lambda} A_j^*XA_j,\qquad\forall~X\in\B{\mcl{H}_1}.
    \end{align}
    \item (Stinespring representation:) There exist a pair $(\mcl{K},A)$ consisting of a  separable  Hilbert space $\mcl{K}$ and a bounded linear operator $A:\mcl{H}_{2} \to  \mcl{H}_1 \otimes\mcl{K}$ such that 
    \begin{align}\label{eq-Sti-decomp-CP}
       \Phi(X)= A^*(X\otimes I_{\mcl{K}})A,\qquad\forall~X\in\B{\mcl{H}_1},
    \end{align}
    where $I_{\mcl{K}}$ is the identity operator on $\mcl{K}$.
 \end{enumerate}
 The pair $(\mcl{K},A)$ in the Stinespring representation can be chosen to be minimal in the sense that $\cspan\{(X\otimes I_\mcl{K})Ah: h\in\mcl{H}_2, X\in\B{\mcl{H}_1}\}=\mcl{H}_1\otimes\mcl{K}$; such a pair is unique up to unitary equivalence. If $\Phi$ has Kraus decomposition \eqref{eq-Kraus-decomp-CP}, then by taking $\mcl{K}$ to be a Hilbert space with an orthonormal basis $\{e_j:j\in \Lambda\}$  and by defining $A:\mcl{H}_{2} \to  \mcl{H}_1 \otimes\mcl{K}$ as  $Ax:=\sum_{j\in \Lambda} A_jx\otimes e_j$ we get a Stinespring representation \eqref{eq-Sti-decomp-CP}. Conversely, if $\Phi$ has a Stinespring representation \eqref{eq-Sti-decomp-CP}, fixing an orthonormal basis $\{e_j\}_{j\in \Lambda}$ of $\mcl{K}$, the operators  $A_j:\mcl{H}_2\to\mcl{H}_1$ defined by $A_j^*x:=A^*(x\otimes e_j),j\in\Lambda$, gives a Kraus decomposition \eqref{eq-Kraus-decomp-CP} of $\Phi$.

 Let $\Phi: \TC{\mcl{H}_{1}} \to \TC{\mcl{H}_{2}}$ be a  bounded (w.r.t. trace norm) linear map. Then there exists a  bounded (w.r.t. operator norm) linear map $\Phi^{*}: \B{\mcl{H}_{2}} \to \B{\mcl{H}_{1}}$, called the \it{dual} of $\Phi$, uniquely determined by the identity  $\tr({\Phi^*(X)}T)= \tr(X\Phi(T))$ for $T\in\TC{\mcl{H}},X\in\B{\mcl{H}_2}$. A bounded linear map $\Phi: \TC{\mcl{H}_{1}} \to \TC{\mcl{H}_{2}}$ is said to be a \it{quantum operation} if $\Phi^*$ is a normal CP-map. (In quantum information theory, one often assumes that $\tr(\Phi(T))=\tr(T)$ for all $T\in\TC{\mcl{H}_1}$ (equivalently $\Phi^*(I)=I$), and in such case $\Phi$ is called a \it{quantum channel}). Given a bounded linear map $\Phi:\TC{\mcl{H}_1}\to\TC{\mcl{H}_2}$,  from the duality between the maps $\Phi$ and $\Phi^*$, it follows that the following assertions are equivalent: 
 \begin{enumerate}[label=(\roman*)] 
    \item $\Phi$ is a quantum operation.
    \item (Kraus decomposition:) There exist a countable family of  bounded linear operators $\{A_j\}_{j\in\Lambda}\subseteq\B{\mcl{H}_1,\mcl{H}_2}$ with  $\ssum_{j\in \Lambda}A_j^*A_j\in \B{\mcl{H}_1}$ such that \footnote{Whenever the series $\sum_jA_j^*A_j$ converges in weak operator topology the series $\sum_jA_jTA_j^*$ converges in trace norm for every  $T\in\TC{\mcl{H}_1}$. (c.f. \cite[Proposition 6.3]{Att6}).}
    \begin{align}\label{eq-Kraus-decomp-QC}
        \Phi(T)=\sum_{j\in \Lambda} A_jTA_j^*,\qquad\forall~T\in\TC{\mcl{H}_1}.
    \end{align}
    \item (Stinespring representation:) There exist a pair $(\mcl{K},A)$ consisting of a separable Hilbert space $\mcl{K}$ and a bounded operator $A:\mcl{H}_1 \to \mcl{H}_2\otimes\mcl{K}$ such that 
  \begin{align}\label{eq-Sti-decomp-QC}
       \Phi(T)= \tr_{\mcl{K}}(ATA^*),\qquad\forall~T\in\TC{\mcl{H}_1},
   \end{align}
   where $\tr_\mcl{K}$ is the partial trace w.r.t. the Hilbert space $\mcl{K}$.
 \end{enumerate}
 If $\Phi$ is a quantum channel, then in $(ii)$ we get $\ssum_jA_j^*A_j=I$; and in $(iii)$  we have $A$ is an isometry. Observe that $(\mcl{K},A)$ is a Stinespring representation of a quantum operation $\Phi$ if and only if $(\mcl{K},A)$ is a Stinespring representation of  $\Phi^*$.

 Note that a bounded normal linear map $\Phi:\B{\mcl{H}_1}\to\B{\mcl{H}_2}$ is CP if and only if $\id_{\mcl{K}}\otimes\Phi:\B{\mcl{K}\otimes\mcl{H}_1}\to\B{\mcl{K}\otimes\mcl{H}_2}$ is positive for all separable Hilbert spaces $\mcl{K}$.

\begin{defn}
 A quantum operation $\Phi:\TC{\mcl{H}_{1}} \to \TC{\mcl{H}_{2}}$ is said to be 
 \begin{enumerate}[label=(\roman*)]
    \item \it{entanglement breaking (EB-)map} if $(\id_\mcl{K} \otimes \Phi)(\rho)\in\TC{\mcl{K}\otimes\mcl{H}_2}$ is separable for all separable Hilbert space $\mcl{K}$ and $\rho \in \ST{\mcl{K} \otimes \mcl{H}_{1}}$.
    \item \it{strong entanglement breaking (SEB-)map} if $(\id_{\mcl{K}} \otimes \Phi)(\rho)\in\TC{\mcl{K}\otimes\mcl{H}_2}$ is countably decomposable separable for all separable Hilbert space $\mcl{K}$ and $\rho \in \ST{\mcl{K}\otimes \mcl{H}_{1}}$.
    \item \it{positive partial transpose (PPT)} if its dual $\Phi^*:\B{\mcl{H}_2} \to \B{\mcl{H}_1}$ is a PPT-map. 
    \end{enumerate}
\end{defn}

 Every SEB-map is an EB-map; the converse is true if $\mcl{H}_1,\mcl{H}_2$ are finite dimensional, but not true in general. See \cite{Kho08,HSR03,KSW05,He13,LiDu15} for more details. We let $\mathrm{SEB}(\mcl{H}_1,\mcl{H}_2)$ denotes the set of all SEB-maps from $\TC{\mcl{H}_1}$ to $\TC{\mcl{H}_2}$. Note that if $\Phi\in\mathrm{SEB}(\mcl{H}_1,\mcl{H}_2)$, then $\Gamma\circ\Phi$ is SEB for every quantum operation $\Gamma:\TC{\mcl{H}_2}\to\TC{\mcl{K}}$. From the following theorem it follows that $\Phi\circ\Gamma$ is also SEB. 
  
\begin{thm}\label{thm-SEB-char}
 Given a quantum operation $\Phi:\TC{\mcl{H}_1} \to \TC{\mcl{H}_2}$ the following statements are equivalent: 
 \begin{enumerate}[label=(\roman*)]
    \item $\Phi$ is SEB.
    \item (Kraus decomposition:) There exist rank-one Kraus operators $\{A_j\}_{j\in \Lambda}\subseteq\B{\mcl{H}_1,\mcl{H}_2}$ with $\ssum_{j\in\Lambda}A_j^*A_j\in\B{\mcl{H}_1}$ such that 
    \begin{align}\label{eq-Kraus-decomp-SEB}
        \Phi(T)=\sum_{j\in\Lambda}A_jTA_j^*,\qquad\forall~T\in\TC{\mcl{H}_1}.
    \end{align}
    \item (Holevo form:) There exist $\{R_j\}_{j\in \Lambda}\subseteq\ST{\mcl{H}_2}$ and $\{F_j\}_{j\in \Lambda}\subseteq\B{\mcl{H}_1}^+$ with $\ssum_{j\in \Lambda} F_{j}\in\B{\mcl{H}_1}$ such that 
    \begin{align}\label{eq-Holevo-SEB}
        \Phi(T)= \sum_{j\in\Lambda}\tr(TF_j)R_j,\qquad\forall~T\in\TC{\mcl{H}_1}.
    \end{align}
    \end{enumerate}
\end{thm}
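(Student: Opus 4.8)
The plan is to establish the cyclic chain of implications $(i)\Rightarrow(ii)\Rightarrow(iii)\Rightarrow(i)$, exploiting the fact that every implication reduces to an appropriate statement about a single state after applying the ampliation $\id_\mcl{K}\otimes\Phi$. Throughout, I would lean on the reduction (noted in the preamble) that in the definitions of separability and countable decomposability one may replace arbitrary product states by rank-one product states, via the spectral theorem for compact positive operators.

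\medskip

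\textbf{$(i)\Rightarrow(ii)$.} The key device is the Choi-type / maximally entangled vector encoding. First I would apply the SEB hypothesis to the ampliation $(\id_{\mcl{H}_1}\otimes\Phi)$ evaluated on a canonical maximally entangled (or, in the infinite-dimensional case, suitably approximated) state built from an orthonormal basis $\{f_i\}$ of $\mcl{H}_1$. Because $\Phi$ is SEB, the output is countably decomposable separable, so it admits a decomposition as a countable sum $\sum_n p_n\, \xi_n\otimes\eta_n$ of rank-one product states, i.e.\ $\sum_n \ranko{x_n}{x_n}\otimes\ranko{y_n}{y_n}$ after absorbing the weights into the vectors. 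I would then read off rank-one Kraus operators $A_n$ from the vectors $x_n\in\mcl{H}_1$ and $y_n\in\mcl{H}_2$ appearing in this decomposition, setting $A_n := \ranko{y_n}{\bar{x}_n}$ (the conjugation handling the standard identification of $\mcl{H}_1$ with its dual under the entangled vector). Verifying that the $A_n$ so obtained satisfy $\Phi(T)=\sum_n A_n T A_n^*$ for all $T\in\TC{\mcl{H}_1}$, and that $\ssum_n A_n^*A_n\in\B{\mcl{H}_1}$ (this last from $\Phi$ being a quantum operation), is the bookkeeping core of this direction.

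\medskip

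\textbf{$(ii)\Rightarrow(iii)$.} This is essentially algebraic. Given rank-one Kraus operators, write each as $A_j=\ranko{\psi_j}{\phi_j}$ with $\psi_j\in\mcl{H}_2$, $\phi_j\in\mcl{H}_1$. Then $A_j T A_j^* = \ip{\phi_j,T\phi_j}\,\ranko{\psi_j}{\psi_j}$, so setting $F_j:=\ranko{\phi_j}{\phi_j}\in\B{\mcl{H}_1}^+$ and $R_j:=\snorm{\psi_j}^{-2}\ranko{\psi_j}{\psi_j}\in\ST{\mcl{H}_2}$ (rescaling $F_j$ by $\snorm{\psi_j}^2$ to compensate) yields $\Phi(T)=\sum_j \tr(TF_j)R_j$. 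The condition $\ssum_j F_j\in\B{\mcl{H}_1}$ follows from $\ssum_j A_j^*A_j\in\B{\mcl{H}_1}$ since $A_j^*A_j=\snorm{\psi_j}^2 F_j$. One must be careful that convergence of the trace-norm series $\sum_j A_j T A_j^*$ for every $T\in\TC{\mcl{H}_1}$ transfers to the Holevo series, but this is controlled by the footnoted fact relating weak-operator convergence of $\sum_j A_j^*A_j$ to trace-norm convergence.

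\medskip

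\textbf{$(iii)\Rightarrow(i)$.} Here I would directly verify the SEB definition. Fix a separable $\mcl{K}$ and a state $\rho\in\ST{\mcl{K}\otimes\mcl{H}_1}$; it suffices (by the rank-one reduction) to take $\rho=\ranko{\zeta}{\zeta}$. Applying $\id_\mcl{K}\otimes\Phi$ and using the Holevo form, one obtains $(\id_\mcl{K}\otimes\Phi)(\rho)=\sum_j (\id_\mcl{K}\otimes\tr(\,\cdot\,F_j))(\rho)\otimes R_j$, where each term factors as (a positive trace-class operator on $\mcl{K}$) $\otimes\, R_j$, manifestly a product. Since $R_j$ are states and the $\mcl{K}$-factors are positive trace-class, this is a countable sum of positive product operators, hence countably decomposable separable after normalization. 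I expect the \textbf{main obstacle} to lie in the $(i)\Rightarrow(ii)$ direction in the \emph{infinite-dimensional} setting: there is no genuine maximally entangled state on $\mcl{K}\otimes\mcl{H}_1$ when $\mcl{H}_1$ is infinite-dimensional, so the extraction of Kraus operators cannot use a single finite entangled vector. I would circumvent this either by an approximation/limiting argument over finite-rank projections $P_n\uparrow I_{\mcl{H}_1}$, applying SEB to $(\id\otimes\Phi)$ on truncated entangled states and passing to a limit (with care that countable decomposability is preserved under the limit), or by invoking the minimal Stinespring representation of $\Phi^*$ and arguing that SEB forces the representation to decompose into rank-one pieces. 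Ensuring that the resulting decomposition is \emph{countable} (as opposed to merely separable) is precisely where the SEB hypothesis, rather than mere EB, must be used.
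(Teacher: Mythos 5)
Your cyclic chain $(i)\Rightarrow(ii)\Rightarrow(iii)\Rightarrow(i)$ is the same architecture as the paper's actual proof (Theorem \ref{thm-QO-char} in the appendix), and your $(ii)\Rightarrow(iii)$ and $(iii)\Rightarrow(i)$ steps match it essentially verbatim. The problem is $(i)\Rightarrow(ii)$, which you correctly flag as the crux but do not actually resolve; both escape routes you offer have a genuine gap. The truncation route --- apply SEB to $(\id\otimes\Phi)$ on states cut down by finite-rank projections $P_n\uparrow I$ and ``pass to a limit with care that countable decomposability is preserved'' --- cannot work as stated: countably decomposable separability is \emph{not} closed under trace-norm limits. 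Its trace-norm closure is exactly the set of separable states, and the two classes genuinely differ in infinite dimensions (this is precisely why EB and SEB are distinct notions here), so a limit of countably decomposable decompositions buys you only EB, not SEB, and the whole point of the theorem evaporates. The paper sidesteps this entirely: it needs no limit of decompositions, because it applies the SEB hypothesis \emph{once}, to the single genuine state $\ranko{e}{e}$ with $e=\sum_j\sqrt{\lambda_j}\,e_j\otimes e_j$ and $\sum_j\lambda_j<\infty$ --- a normalizable vector of full Schmidt rank, which exists even when $\dim\mcl{H}_1=\infty$. That is the device your proposal is missing.

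The second gap is your claim that $\ssum_n A_n^*A_n\in\B{\mcl{H}_1}$ ``follows from $\Phi$ being a quantum operation.'' It does not follow; it is the hardest estimate in the proof. The rank-one Kraus operators read off from the decomposition $\sum_k r_k\ranko{u_k}{u_k}\otimes\ranko{v_k}{v_k}$ of $(\id\otimes\Phi)(\ranko{e}{e})$ are $A_k=\ranko{v_k}{z_k}$ with $z_k=\sum_i\lambda_i^{-1/2}\sqrt{r_k}\,\ip{u_k,e_i}\,e_i$: the coordinates of $u_k$ get divided by the Schmidt coefficients $\sqrt{\lambda_i}$, an unbounded rescaling, so it is not even clear a priori that $z_k\in\mcl{H}_1$. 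The paper establishes this by taking the partial trace of the decomposition to get $\sum_k r_k\ranko{u_k}{u_k}\le\norm{\Phi^*(I)}\sum_j\lambda_j E_{jj}$, deducing the uniform finite-section bound $\sum_{k=1}^m P_n\ranko{z_k}{z_k}P_n\le\norm{\Phi^*(I)}P_n$ for all $m,n$, and then invoking \cite[Lemma 3.3]{LiDu15} to remove the compressions and conclude $\ssum_k\ranko{z_k}{z_k}\le\norm{\Phi^*(I)}I$. Without this (or an equivalent argument), your extracted family is not known to be a legitimate Kraus family of a quantum operation, and the verification that $\sum_n A_nTA_n^*=\Phi(T)$ for all trace-class $T$ (rather than just on the finite corner spanned by the $E_{ij}$) has no foundation. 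Your alternative suggestion via the minimal Stinespring representation of $\Phi^*$ is too vague to assess and is not the route the paper takes.
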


 For the finite-dimensional case of the above theorem we refer \cite{HSR03}. See \cite{KSW05,He13,LiDu15} for a proof of the above theorem for quantum channels on infinite-dimensional Hilbert spaces; the theorem is  true even for quantum operations. See Appendix (Theorem \ref{thm-QO-char}) for details. 

\begin{defn}
 Let  $\Phi:\TC{\mcl{H}_1}\to\TC{\mcl{H}_2}$ and $\Psi:\TC{\mcl{H}_1}\to\TC{\mcl{H}_3}$ be two quantum operations. Then $\Psi$ is said to be \it{complementary} to $\Phi$ If there exists a bounded operator $A:\mcl{H}_{1} \to \mcl{H}_2\otimes\mcl{H}_{3}$  such that 
 \begin{equation}\label{eq-compl-cp}
   \Phi(T)=\tr_{\mcl{H}_3}(ATA^*)
   \qquad
   \mbox{and}
   \qquad
   \Psi(T)=\tr_{\mcl{H}_2}(ATA^*) 
 \end{equation}
 for all $T \in \TC{\mcl{H}_1}$. We  let $\msc{CM}(\Phi)$ denotes the set of all quantum operations complementary to $\Phi$.  We say that $\Phi$ is \it{self-complementary} if $\Phi$ is complementary to itself (i.e., $\Phi\in\msc{CM}(\Phi)$). 
\end{defn}

 Let $\Phi:\TC{\mcl{H}_1}\to\TC{\mcl{H}_2}$ and $\Psi:\TC{\mcl{H}_1}\to\TC{\mcl{H}_3}$ be two quantum operations. Assume $\Psi\in\msc{CM}(\Phi)$ and $A:\mcl{H}_1\to\mcl{H}_2\otimes\mcl{H}_3$ be such that \eqref{eq-compl-cp} holds. Then taking $\u{A}=FA:\mcl{H}_1\to\mcl{H}_3\otimes\mcl{H}_2$, where $F:\mcl{H}_2\otimes\mcl{H}_3\to \mcl{H}_3\otimes\mcl{H}_2$ is the unitary operator given by $F(x\otimes y)=y\otimes x$ for all $x\in\mcl{H}_2,y\in\mcl{H}_3$, we get $\Psi(T)=\tr_{\mcl{H}_2}(\u{A}T\u{A}^*)$ and $\Phi(T)=\tr_{\mcl{H}_3}(\u{A}T\u{A}^*)$ for all $T\in\TC{\mcl{H}_1}$. Thus $\Phi$ is complementary to $\Psi$.  Thus $\Psi\in\msc{CM}(\Phi)$ if and only if $\Phi\in\msc{CM}(\Psi)$.
 
 If the quantum operation $\Phi:\TC{\mcl{H}_1}\to\TC{\mcl{H}_2}$ has a Stinespring representation $(\mcl{K},A)$, then the map $\Phi^c:\TC{\mcl{H}_1}\to\TC{\mcl{K}}$ defined by 
  \begin{align}\label{eq-compl-defn-1}
         \Phi^c(T):=\tr_{\mcl{H}_2}(ATA^*),\qquad\forall~T\in\TC{\mcl{H}_1}
  \end{align}
 is a quantum operation and is complementary to $\Phi$.  Note that the definition of the map $\Phi^c$ depends on the representation $(\mcl{K},A)$, so we may use the notation $\Phi^c_{(\mcl{K},A)}$ in place of $\Phi^c$. Letting $\u{A}=FA$, where $F:\mcl{H}_2\otimes\mcl{K}\to\mcl{K}\otimes\mcl{H}_2$ is the flip map as above,  we have $(\mcl{H}_2,\u{A})$ is a  Stinespring representation for $\Psi:=\Phi^c\in\msc{CM}(\Phi)$. So $ \Psi^c_{(\mcl{H}_2,\u{A})}:\TC{\mcl{H}_1}\to\TC{\mcl{H}_2}$ is given by 
 \begin{align*}
     \Psi^c_{(\mcl{H}_2,\u{A})}(T)
        =\tr_{\mcl{K}}(\u{A}T\u{A}^*)
        =\tr_{\mcl{K}}(ATA^*)=\Phi(T),\qquad\forall~T\in\TC{\mcl{H}_1}.
 \end{align*}
 Given a Stinespring representation $(\mcl{K},A)$ of a  quantum operation  $\Phi:\TC{\mcl{H}_1}\to\TC{\mcl{H}_2}$ we always let $(\Phi^c)^c:=(\Phi^c_{(\mcl{K},A)})^c_{(\mcl{H}_2,\u{A})}$. Then from the above equation we have $(\Phi^c)^c=\Phi$. Now, suppose $\Phi:\TC{\mcl{H}_1}\to\TC{\mcl{H}_2}$ has a Kraus decomposition as in \eqref{eq-Kraus-decomp-QC}. Let $\mcl{K}$ be a separable complex Hilbert space  with orthonormal basis $\{e_j\}_{j\in\Lambda}$. Note that $A:\mcl{H}_1\to\mcl{H}_2\otimes\mcl{K}$ defined by $A(x)=\sum_{j\in\Lambda}A_jx\otimes e_j$ is a bounded operator such that $\Phi(T)=\tr_{\mcl{K}}(ATA^*)$ so that  $(\mcl{K},A)$ is a Stinespring representation for $\Phi$. Further, $\Phi^c$ defined in \eqref{eq-compl-defn-1} is given by
  \begin{align}\label{eq-compl-defn-2}
     \Phi^c(T):=\tr_{\mcl{H}_2}(ATA^*)=\sum_{i,j\in\Lambda}\tr(A_iTA_j^*)\ranko{e_i}{e_j}.
 \end{align}
 (See Proposition \ref{Comp_channel_formula} for details.)
     
 Note that if $\Psi\in\msc{CM}(\Phi)$, then $\mathrm{Ad}_W\circ\Psi\in\msc{CM}(\Phi)$ for all co-isometry $W$ between appropriate Hilbert spaces. 
 This fact follows from the following identities: 
 \begin{align*}
     \tr_{\mcl{H}_3}\Big((I_{\mcl{H}_2}\otimes Y)ATA^*(I_{\mcl{H}_2}\otimes Y^*)\Big)&=\tr_{\mcl{K}}((I_{\mcl{H}_2}\otimes Y^*Y)ATA^*)\\
      \tr_{\mcl{H}_2}\Big((I_{\mcl{H}_2}\otimes Y)ATA^*(I_{\mcl{H}_2}\otimes Y^*)\Big)&=Y\tr_{\mcl{H}_2}(ATA^*)Y^*
 \end{align*}
 for every $A\in\B{\mcl{H}_1,\mcl{H}_2\otimes\mcl{K}}, T\in\TC{\mcl{H}_1}$ and $Y\in\B{\mcl{K},\mcl{H}_3}$. It follows now that $(\Phi\circ \mathrm{Ad}_Z)_{(\mcl{K},AZ^*)}^c=\Phi^c_{(\mcl{K},A)}\circ\mathrm{Ad}_Z$ for all $Z\in\B{\mcl{H},\mcl{H}_1}$ and $(Ad_V\circ\Phi)^c_{(\mcl{K},(V^*\otimes I_{\mcl{K}})A)}=\Phi^c_{(\mcl{K},A)}$ for all co-isometry $V\in\B{\mcl{H}_3,\mcl{H}_2}$. 

\begin{rmk}\label{rmk-unique-complem}
 Let $\Phi:\TC{\mcl{H}_1}\to\TC{\mcl{H}_2}$ be a quantum operation with a minimal Stinespring representation $(\widehat{\mcl{K}},\widehat{A})$. 
 \begin{enumerate}[label=(\roman*)]
     \item Suppose $(\widetilde{\mcl{K}},\widetilde{A})$ is another minimal Stinespring representation of $\Phi$ so that, by Theorem \ref{thm-Stine-QC-uniq}, there is a unitary $U:\widehat{\mcl{K}}\to\widetilde{\mcl{K}}$ such that $\widetilde{A}=(I_{\mcl{H}_2}\otimes U)\widehat{A}$. Then 
     \begin{align*}
        \Phi^c_{(\widetilde{\mcl{K}},\widetilde{A})}(T)
           =\tr_{\mcl{H}_2}(\widetilde{A}T\widetilde{A}^*)
          =\mathrm{Ad}_U\circ\Phi^c_{(\widehat{\mcl{K}},\widehat{A})}(T),\qquad\forall~T\in\TC{\mcl{H}_1}.
     \end{align*}
     Thus the complement of $\Phi$ defined as in \eqref{eq-compl-defn-1} from a minimal Stinespring representation is unique up to unitary conjugation, and we denote such a complementary map by $\Phi^c_{\min}$. 
     \item  Let $\Psi:\TC{\mcl{H}_1}\to\TC{\mcl{K}}$ be a quantum operation complementary to $\Phi$. So there exists $A\in\B{\mcl{H}_1,\mcl{H}_2\otimes\mcl{K}}$ such that $\Phi(T)=\tr_{\mcl{K}}(ATA^*)$ and $\Psi(T)=\tr_{\mcl{H}_2}(ATA^*)$ for all $T\in\TC{\mcl{H}_1}$. Since $(\mcl{K},A)$ is a Stinespring representation for $\Phi$, from Theorem  \ref{thm-Stine-QC-uniq}, there exists an isometry $V\in\B{\widehat{\mcl{K}},\mcl{K}}$ such that  $A=(I_{\mcl{H}_2}\otimes V)\widehat{A}$ and $\widehat{A}=(I_{\mcl{H}_2}\otimes V^*)A$.  Consequently, for all $T\in\TC{\mcl{H}_1}$ we have
 \begin{align*}
      \Psi(T)=V\Phi^c_{\min}(T)V^*
      \quad\mbox{and}\quad
      \Phi^c_{\min}(T)=V^*\Psi(T)V.
 \end{align*}
 Thus we have
 \begin{align*}
      \msc{CM}(\Phi)=\{\mathrm{Ad}_{V^*}\circ\Phi^c_{\min}: V\in\B{\widehat{\mcl{K}},\mcl{K}} \mbox{ is an isometry and } \mcl{K}\mbox{ Hilbert space}\}
 \end{align*}
 In particular, $\Phi$ is self-complementary if and only if $\Phi=\mathrm{Ad}_{V^*}\circ\Phi^c_{\min}$ for some isometry $V\in\B{\widehat{\mcl{K}},\mcl{H}_2}$.  Now suppose $\Psi_1:\TC{\mcl{H}_1}\to\TC{\mcl{H}_3}$ and $\Psi_2:\TC{\mcl{H}_1}\to\TC{\mcl{H}_4}$ are two quantum operations complementary to $\Phi$. Then there exist isometries $V_1\in\B{\widehat{\mcl{K}},\mcl{H}_3}$ and $V_2\in\B{\widehat{\mcl{K}},\mcl{H}_4}$ such that $\Phi^c_{\min}=\mathrm{Ad}_{V_j}\circ\Psi_j$ and $\Psi_j=\mathrm{Ad}_{V_j^*}\circ\Phi^c_{\min}$ for $j=1,2$.  Note that $W:=V_2V_1^*\in\B{\mcl{H}_3,\mcl{H}_4}$ is a partial isometry such that 
  \begin{align}\label{eq-compl-channel-equiv}
      \Psi_2(T)=W\Psi_1(T)W^*
      \quad\mbox{and}\quad
      \Psi_1(T)=W^*\Psi_2(T)W,\qquad\forall~T\in\TC{\mcl{H}_1}.
  \end{align}  
   Thus, maps complementary to a quantum operation is unique up to a  partial isometry in the sense of \eqref{eq-compl-channel-equiv}. In finite-dimensional case uniqueness is up to an isometry. See Section \ref{sec-fd-case}.  
 \end{enumerate}
\end{rmk}
 
\begin{defn}(\cite{DeSh05})
 A quantum operation $\Phi:\TC{\mcl{H}_1}\to\TC{\mcl{H}_2}$ is said to be 
 \begin{enumerate}[label=(\roman*)]
    \item \it{degradable}  if there exists a quantum operation $\Gamma:\TC{\mcl{H}_2}\to\TC{\mcl{K}}$ for some Hilbert space $\mcl{K}$ such that $\Gamma\circ\Phi\in \msc{CM}(\Phi)$. 
    \item \it{anti-degradable} if there exists a $\Psi\in \msc{CM}(\Phi)$ such that $\Psi$ is degradable.
\end{enumerate}
\end{defn}

 Suppose $(\mcl{K},A)$ Stinespring representation for the quantum operation $\Phi:\TC{\mcl{H}_1}\to\TC{\mcl{H}_2}$  and consider $\Phi^c:\TC{\mcl{H}_1}\to\TC{\mcl{K}}$ as in \eqref{eq-compl-defn-1}. Then from the above remark  it follows that $\Phi$ is degradable if and only if there exists a quantum operation $\Gamma:\TC{\mcl{H}_2}\to\TC{\mcl{K}}$ such that $\Gamma\circ\Phi=\Phi^c$. Similarly, $\Phi$ is anti-degradable if and only if $\Phi^c$ is degradable if and only if there exists a quantum operation $\Gamma:\TC{\mcl{K}}\to\TC{\mcl{H}_2}$ such that $\Gamma\circ\Phi^c=\Phi$. 

\begin{eg}
 Let $A\in\B{\mcl{H}}$ and consider $\Phi=\mathrm{Ad}_A:\TC{\mcl{H}}\to\TC{\mcl{H}}$. Note that $\Phi^c(T)=\tr(A^*TA)=\Gamma\circ\Phi(T)$ for all $T\in\TC{\mcl{H}}$, where $\Gamma(\cdot)=\tr(\cdot)$. Thus $\Phi$ is degradable. But  $\Phi$ anti-degradable if and only if  $\Phi$ is SEB. For, if $\Phi$ is anti-degradable there exists a quantum operation $\Gamma':\mbb{C}\to\TC{\mcl{H}}$ such that $\Phi=\Gamma'\circ\Phi^c$. So there exists a positive operator $R\in\TC{H}$ such that $\Phi(T)=\tr(A^*TA)R=\tr(AA^*T)R$ for all $T\in\TC{\mcl{H}}$. Thus $\Phi$ is SEB (so that $A$ is a rank-one operator). Converse follows from the following remark. 
\end{eg} 

\begin{rmk}
 Let $\Phi\in\mathrm{SEB}(\mcl{H}_1,\mcl{H}_2)$. Assume $\Phi$ has a Kraus decomposition as in \ref{eq-Kraus-decomp-SEB} with  $A_j=\ranko{v_j}{u_j}$, where $u_j\in\mcl{H}_1$ and $v_j\in\mcl{H}_2$. Let $\mcl{K}$ be a separable complex Hilbert space with orthonormal basis $\{e_j:j\in\Lambda\}$ and let $B_j:=\ranko{\frac{v_j}{\norm{v_j}}}{e_j}$ for all $j\in\Lambda$. Define $\Gamma:\TC{\mcl{K}}\to\TC{\mcl{H}_2}$  by $\Gamma(T):=\sum_{j\in\Lambda}B_jTB_j^*$  for all $T\in\TC{\mcl{H}_1}$. Since $\ssum_j B_j^*B_j=I$ the map $\Gamma$ is a quantum channel. Further, 
 \begin{align*}
    \Gamma\circ\Phi^c(T)
                      =\sum_{i,j\in\Lambda}\ip{u_i,Tu_j}\ip{v_j,v_i}\Gamma(\ranko{e_i}{e_j})
                      =\sum_{i\in\Lambda}\ip{u_i,Tu_i}\ranko{v_i}{v_i}
                      =\Phi(T)
 \end{align*}
 for all $T\in\TC{\mcl{H}_1}$, where $\Phi^c$ is given by \eqref{eq-compl-defn-2}. This concludes that every SEB-map is anti-degradable. This fact was observed for the finite dimensional case in \cite{CRS08}. 
\end{rmk}

\section{Degradable SEB-maps} 

 In this section, our aim is to determine all degradable SEB-maps. We begin with an example.
 
\begin{eg}
 Consider the map $\Phi:\TC{\mcl{H}_1}\to\TC{\mcl{H}_2}$ defined by 
\begin{align*}
    \Phi(T)=\ip{u,Tu}R,\qquad\forall~T\in\TC{\mcl{H}_1},
\end{align*}
 where $u\in\mcl{H}_1$ is a non-zero vector and $R\in\TC{\mcl{H}_2}$ is positive. By spectral theorem of compact positive operators there exist orthogonal vectors $\{v_j\}_{j\in\Lambda}\subseteq \mcl{H}_2$ such that $R=\sum_{j\in\Lambda}\ranko{v_j}{v_j}$. Then $\Phi(T)=\sum_{j\in\Lambda}A_jTA_j^*$ for all $T\in\TC{\mcl{H}_1}$, where $A_j=\ranko{v_j}{u}$. Let $\mcl{K}$ be a  Hilbert space with an orthonormal basis $\{e_j\}_{j\in\Lambda}$ and consider $\Phi^c$ as in \eqref{eq-compl-defn-2}. Since $\widetilde{R}:=\sum_{j\in\Lambda}\ip{v_j,v_j}\ranko{e_j}{e_j}\in\TC{\mcl{K}}$ is positive the complementary map
 \begin{align*}
     \Phi^c(T)=\sum_{i,j\in\Lambda}\ip{u,Tu}\ip{v_j,v_i}\ranko{e_i}{e_j}  
            =\ip{u,Tu}\widetilde{R},\qquad\forall~T\in\TC{\mcl{H}_1}
 \end{align*}
 is SEB. Hence $\Phi^c$ anti-degradable so that $\Phi$ is degradable.
\end{eg}

\begin{thm}\label{thm-DEB-char}
 Given $\Phi\in \mathrm{SEB}(\mcl{H}_1,\mcl{H}_2)$ the following conditions are equivalent:
 \begin{enumerate}[label=(\roman*)]
    \item $\Phi$ is degradable. 
    \item Any (and, hence all) of the elements of $\msc{CM}(\Phi)$ is SEB.
    \item Any (and, hence all) of the elements of $\msc{CM}(\Phi)$ is PPT.
    \item There exist non-zero vectors  $\{u_j\}_{j\in\Lambda}\subseteq\mcl{H}_1$ and unit vectors  $\{v_j\}_{j\in\Lambda}\subseteq\mcl{H}_2$ with $\ssum_{j\in\Lambda}\ranko{u_j}{u_j}\in\B{\mcl{H}_1}$ and  $\ip{v_i,v_j}=0$ whenever  $\{u_i,u_j\}$ is linearly independent such that  
    \begin{align}\label{eq-Holevo-SEB-degrd}
         \Phi(T)=\sum_{j\in \Lambda}\ip{u_j,Tu_j}\ranko{v_j}{v_j},\qquad\forall~T\in\TC{\mcl{H}_1}. 
    \end{align}
    \item There exist rank-one positive operators $\{F_j\}_{j\in\Lambda}\subseteq\TC{\mcl{H}_1}$ with $\ssum_{j\in\Lambda} F_j\in\B{\mcl{H}_1}$ and states $\{R_j\}_{j\in\Lambda}\subseteq\ST{\mcl{H}_2}$ with $R_iR_j=0$ for all $i\neq j$ such that 
    \begin{align*}
        \Phi(T)=\sum_{j\in\Lambda}\tr(TF_j)R_j,\qquad\forall~T\in\TC{\mcl{H}_1}. 
    \end{align*}
    \item $\Phi$ is self-complementary.
 \end{enumerate}
\end{thm}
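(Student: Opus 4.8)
The plan is to prove the single cycle of implications $(i)\Rightarrow(ii)\Rightarrow(iii)\Rightarrow(iv)\Rightarrow(v)\Rightarrow(vi)\Rightarrow(i)$, after two preliminary reductions. First, using Theorem~\ref{thm-SEB-char} I would fix a rank-one Kraus decomposition $\Phi(T)=\sum_j A_jTA_j^*$ with $A_j=\ranko{v_j}{u_j}$, where the $u_j\in\mcl{H}_1$ are non-zero and the $v_j\in\mcl{H}_2$ are unit (absorbing norms into the $u_j$), so that $\Phi(T)=\sum_j\ip{u_j,Tu_j}\ranko{v_j}{v_j}$, and compute the complementary map from \eqref{eq-compl-defn-2}, namely $\Phi^c(T)=\sum_{i,j}\ip{u_i,Tu_j}\ip{v_j,v_i}\ranko{e_i}{e_j}$. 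Second, I would record that the phrase ``any (and hence all)'' in (ii) and (iii) is justified by \eqref{eq-compl-channel-equiv}: any two complementary maps differ by $\mathrm{Ad}_W$ for a partial isometry $W$, and both the SEB and the PPT properties are stable under composition with such $\mathrm{Ad}_W$ (for SEB this is the post-composition stability noted before Theorem~\ref{thm-SEB-char}; for PPT it follows since $\mathrm{Ad}_W$ is CP and the composite of a co-CP map with a CP map is again co-CP).

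The routine implications along the cycle are as follows. For $(i)\Rightarrow(ii)$: degradability gives a quantum operation $\Gamma$ with $\Gamma\circ\Phi=\Phi^c$, and since $\Phi$ is SEB the composite $\Gamma\circ\Phi$ is SEB, so $\Phi^c$ is SEB. For $(ii)\Rightarrow(iii)$: every SEB map is PPT, because each Holevo summand $X\mapsto\tr(XR_j)F_j$ of the dual has Choi operator $R_j^{\T}\otimes F_j$, which is positive with positive partial transpose $R_j\otimes F_j$. For $(iv)\Rightarrow(v)$: group the indices by the one-dimensional span of $u_j$; writing $u_j=c_j w_\alpha$ on each group turns the sum into $\sum_\alpha\tr(TF_\alpha)R_\alpha$ with $F_\alpha$ rank-one positive and $R_\alpha$ a state, the hypothesis $\ip{v_i,v_j}=0$ across groups giving $R_\alpha R_\beta=0$. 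For $(v)\Rightarrow(vi)$: spectrally decomposing the mutually orthogonal states $R_j$ yields globally orthonormal vectors, and a direct computation of $\Phi^c$ for the resulting Kraus family produces exactly $\Phi$ transported by the isometry sending the ancilla basis to those vectors; the remark preceding Remark~\ref{rmk-unique-complem} then places $\Phi$ in $\msc{CM}(\Phi)$. For $(vi)\Rightarrow(i)$: self-complementarity gives $\Phi=\mathrm{Ad}_{V^*}\circ\Phi^c_{\min}$ for an isometry $V$, whence $\Phi^c_{\min}=\mathrm{Ad}_V\circ\Phi$, so $\Gamma=\mathrm{Ad}_V$ degrades $\Phi$.

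The crux is $(iii)\Rightarrow(iv)$. I would translate the PPT hypothesis on $\Phi^c$ into co-complete positivity of its dual, for which a short computation gives $(\Phi^c)^*(\ranko{e_a}{e_b})=\ip{v_a,v_b}\ranko{u_a}{u_b}$. Testing co-CP on the $2\times2$ block indexed by a fixed pair $i\neq j$, with matrix units built from $e_i,e_j$ and free vectors $h_1,h_2\in\mcl{H}_1$, yields the inequality
\[
|\ip{u_i,h_1}|^2+|\ip{u_j,h_2}|^2+2\,\Re\!\big(\ip{v_i,v_j}\ip{h_2,u_i}\ip{u_j,h_1}\big)\ge 0 .
\]
If $\{u_i,u_j\}$ is linearly independent, the functionals $x\mapsto(\ip{u_i,x},\ip{u_j,x})$ are surjective onto $\mbb{C}^2$, so I may choose $h_1$ with $\ip{u_i,h_1}=0,\ \ip{u_j,h_1}=1$ and $h_2$ with $\ip{u_j,h_2}=0,\ \ip{u_i,h_2}=-\ip{v_i,v_j}$; substituting kills the two square terms and collapses the inequality to $-2|\ip{v_i,v_j}|^2\ge0$, forcing $\ip{v_i,v_j}=0$, which is exactly the orthogonality required in (iv).

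The main obstacle I anticipate is making this co-CP test fully rigorous in the infinite-dimensional setting: reducing co-complete positivity to the finite ($k=2$) block tested against vectors of $\mbb{C}^2\otimes\mcl{H}_1$, and justifying the independent choice of the inner products of $h_1,h_2$ from the linear independence of $u_i,u_j$. A secondary technical point is the bookkeeping needed to pass cleanly between the unit-vector form (iv) and the state form (v), and to verify throughout that the $\mathrm{Ad}_W$-conjugates and compositions involved remain genuine quantum operations.
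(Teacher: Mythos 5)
Your proposal is correct and follows the same cycle $(i)\Rightarrow(ii)\Rightarrow\cdots\Rightarrow(vi)\Rightarrow(i)$ as the paper, with each of the routine implications handled as the authors do: $(i)\Rightarrow(ii)$ via stability of SEB under post-composition plus the $\mathrm{Ad}_{V^*}$-parametrization of $\msc{CM}(\Phi)$, $(iv)\Rightarrow(v)$ by grouping indices according to the span of $u_j$, and $(v)\Rightarrow(vi)$ by spectrally decomposing the mutually orthogonal $R_j$ and transporting $\Phi$ by the inclusion isometry of the span of the resulting orthonormal vectors. The one place you genuinely diverge is the crux $(iii)\Rightarrow(iv)$. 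The paper sandwiches $\Phi^c$ between explicit CP maps $\mathrm{Ad}_{A_{ij}}$ and $\mathrm{Ad}_{B_{ij}}$ to produce the Schur multiplier on $\M{2}$ by the Gram matrix $\sMatrix{1 & \ip{v_j,v_i}\\ \ip{v_i,v_j} & 1}$, and then invokes the cited fact that a PPT Schur map on $\M{2}$ must have diagonal matrix. You instead compute $(\Phi^c)^*(\ranko{e_a}{e_b})=\ip{v_a,v_b}\ranko{u_a}{u_b}$ and test co-complete positivity of the dual directly on a rank-one positive element of $\M2\otimes\B{\mcl{K}}$ against a vector built from $h_1,h_2$ with prescribed pairings against $u_i,u_j$; your inequality and the choice killing the square terms are correct, and linear independence of $u_i,u_j$ does make the functional $x\mapsto(\ip{u_i,x},\ip{u_j,x})$ surjective onto $\mbb{C}^2$. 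Both arguments exploit exactly the same $2\times2$ reduction and the same use of linear independence; yours is self-contained (no appeal to the external proposition on PPT Schur maps), while the paper's is shorter by citation. The "main obstacle" you anticipate is not really one: co-CP already requires positivity of $\T\otimes(\Phi^c)^*$ at $k=2$, and evaluating it on a single rank-one positive operator against a single vector is an elementary necessary condition, with no infinite-dimensional subtlety.
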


\begin{proof}
 $(i)\Rightarrow (ii)$ Assume $\Phi$ is degradable so that $\Phi^c_{\min}=\Gamma\circ\Phi$ for some quantum operation $\Gamma$. Since $\Phi$ is SEB   we have $\Phi^c_{\min}$ is also SEB. Now given any $\Psi\in\msc{CM}({\Phi})$ by Remark \ref{rmk-unique-complem}, there exists an isometry $V$ such that $\Psi=\mathrm{Ad}_{V^*}\circ \Phi^c_{\min}$, hence $\Psi$ is also SEB.\\
 $(ii)\Rightarrow (iii)$ Follows trivially.\\ 
 $(iii)\Rightarrow (iv)$ Let  $A_j\in\B{\mcl{H}_1,\mcl{H}_2}$ be rank-one operators with $\ssum_jA_j^*A_j\in\B{\mcl{H}_1}$ such that  $\Phi(\cdot)=\sum_{j\in\Lambda}A_j(\cdot)A_j^*$. Assume $A_j=\ranko{v_j}{u_j}$ for some $u_j\in\mcl{H}_1$ and unit vector $v_j\in\mcl{H}_2$ for all $j\in\Lambda$. Then $\ssum_j\ranko{u_j}{u_j}\in\B{\mcl{H}_1}$ and $\Phi(T)=\sum_{j\in\Lambda}\ip{u_j,Tu_j}\ranko{v_j}{v_j}$ for all $T\in\TC{\mcl{H}_1}$. Let $\mcl{K}$ be a Hilbert space with orthonormal basis $\{e_j\}_{j\in\Lambda}$ and consider the complementary map $\Phi^c:\TC{\mcl{H}_1}\to\TC{\mcl{K}}$ as in \eqref{eq-compl-defn-2}, i.e.,
 \begin{align*}
     \Phi^c(T) 
          =\sum_{i,j\in\Lambda} \ip{v_j,v_i}\ip{u_i,Tu_j}\ranko{e_i}{e_j},\qquad\forall~T\in\TC{\mcl{H}_1}.
 \end{align*}
 Fix $i\neq j\in\Lambda$ and assume that $\{u_i,u_j\}$ is linearly independent. Consider the standard orthonormal basis $\{e_1^{(2)},e_2^{(2)}\}$ for $\mbb{C}^2$. Let 
 \begin{align*}
     A_{ij}:=\ranko{e_i}{e_1^{(2)}}+\ranko{e_j}{e_2^{(2)}}\in\B{\mbb{C}^2,\mcl{K}}
 \end{align*}
 and $B_{ij}\in\B{\mcl{H}_1,\mbb{C}^2}$ be such that $B_{ij}u_i=e_1^{(2)},B_{ij}u_j=e_2^{(2)}$
 and $B_{ij}=0$ on the orthogonal complement of $\lspan\{u_i,u_j\}$.  Then for all $T\in\M{2}$ we have
 \begin{align*}
     \mathrm{Ad}_{A_{ij}}\circ\Phi^c\circ\mathrm{Ad}_{B_{ij}}(T)&=\sum_{k,l\in\Lambda}\ip{v_l,v_k}\ip{B_{ij}u_k,TB_{ij}u_l}\ranko{A_{ij}^*e_k}{A_{ij}^*e_l}\\
     &=\sum_{k,l\in\{i,j\}}\ip{v_l,v_k}\ip{B_{ij}u_k,TB_{ij}u_l}\ranko{A_{ij}^*e_k}{A_{ij}^*e_l}\\
     &=\Matrix{\norm{v_i}^{2}\ip{e_1^{(2)},Te_1^{(2)}}  & \ip{v_j,v_i}\ip{e_1^{(2)},Te_2^{(2)}} \\ \ip{v_i, v_j}\ip{e_2^{(2)},Te_1^{(2)}} & \norm{v_j}^{2}\ip{e_1^{(2)},Te_2^{(2)}}}\\
     &=\Matrix{\norm{v_i}^{2}  & \ip{v_j,v_i} \\ \ip{v_i, v_j} & \norm{v_j}^{2}}\odot T,
 \end{align*}
 where $\odot$ denotes the Schur product. Since $\Phi^c$ is PPT, the composition $\mathrm{Ad}_{A_{ij}}\circ\Phi^c\circ\mathrm{Ad}_{B_{ij}}:\M{2}\to\M{2}$ is PPT. This is possible if and only if $\sMatrix{\norm{v_i}^{2}  & \ip{v_j,v_i} \\ \ip{v_i, v_j} & \norm{v_j}^{2}}$ is a diagonal matrix (c.f. \cite[Proposition 4.1]{KMP18}). Therefore   $\ip{v_i,v_j}=0$.\\ 
 $(iv)\Rightarrow (v)$ Assume that there exist $\{u_j\}_{j\in\Lambda}\subseteq\mcl{H}_1$ and $\{v_j\}_{j\in\Lambda}\subseteq\mcl{H}_2$ as in $(iv)$. For each $k\in\Lambda$, let 
 \begin{align*}
     D_k := \{j |~ \{u_j,u_k\}\mbox{ is linearly dependent}\}
 \end{align*}
 Clearly, $k\in D_k$ for all $k\in\Lambda$. Further, given $ i,j\in\Lambda$, we have either $D_i \cap D_j = \emptyset$ or $D_i=D_j$. Let $\Lambda_0\subset \Lambda$ be such that   $D_i,D_j$ are mutually disjoint sets for every $i\neq j\in\Lambda_0$ and $\sqcup_{k\in\Lambda_0} D_k=\cup_{k\in\Lambda} D_k=\Lambda $. Thus
 \begin{align*}
    \Phi(T)=\sum_{j\in\Lambda} \ip{u_j,Tu_j}\ranko{v_j}{v_j}
           =\sum_{k\in\Lambda_0}\Big(\sum_{j\in D_k}\ip{u_j,Tu_j}\ranko{v_j}{v_j}\Big),\qquad\forall~T\in\TC{\mcl{H}_1}.
 \end{align*} 
  But, if $j\in D_k$, then $u_j=\lambda_{j,k} u_k$ for some  scalar$\lambda_{j,k}\in\mbb{C}$. Hence
 \begin{align}\label{eq-Phi-C-star}
     \Phi(T)
               =\sum_{k\in\Lambda_0}\ip{u_k,Tu_k}\Big(\sum_{j\in D_k}\abs{\lambda_{j,k}}^2\ranko{v_j}{v_j}\Big) 
               = \sum_{k\in\Lambda_0}\tr(T\wtilde{F}_k )\wtilde{R}_k,
 \end{align}
 where $\widetilde{F}_k:=\ranko{u_k}{u_k}$ and  
 \begin{align*}
    \widetilde{R}_k:=\sum_{j\in D_k}\abs{\lambda_{j,k}}^2\ranko{v_j}{v_j}
                             = \frac{1}{\norm{u_k}^4}\sum_{j\in D_k}\bip{u_j,(\ranko{u_k}{u_k})u_j}\ranko{v_j}{v_j}
 \end{align*}
 are non-zero positive operators.   Let $R_k:=\frac{\widetilde{R}_k}{\tr(\widetilde{R}_k)}\in\ST{\mcl{H}_2}$ and $F_k:=\tr(\widetilde{R}_k)\widetilde{F}_k\in\TC{\mcl{H}_1}$ for all $k\in\Lambda_0$.   Then
 \begin{align*}
       \ssum_{k\in\Lambda_0}F_k
           =\ssum_{k\in\Lambda_0}(\sum_{j\in D_k}\abs{\lambda_{j,k}}^2\norm{v_j}^2)\ranko{u_k}{u_k}
           =\ssum_{k\in\Lambda_0}\sum_{j\in D_k}\ranko{u_j}{u_j}
           =\ssum_{j\in\Lambda}\ranko{u_j}{u_j}\in\B{\mcl{H}_1}
 \end{align*}
 and $\Phi(T)=\sum_{k\in\Lambda_0}\tr(TF_k)R_k$ for all $T\in\TC{\mcl{H}_1}$. Since $\widetilde{R}_k \widetilde{R}_l=0$ we have $R_kR_l=0$  for all $k\neq l\in\Lambda_0$.\\
 $(v)\Rightarrow (vi)$ Assume that $\Phi$ has a representation as in $(v)$. By spectral theorem, write $F_j=\ranko{u_j}{u_j}$ and $R_j=\sum_{i\in\Lambda_j}\ranko{v_{i,j}}{v_{i,j}}$  for some non-zero vector $u_j\in\mcl{H}_1$ and orthogonal set $\{v_{i,j}\}_{i\in\Lambda_j}\subseteq\mcl{H}_2$ for every $j\in\Lambda$. Then, 
 \begin{align*}
     \Phi(T)=\sum_{j\in\Lambda}\tr(TF_j)R_j
        =\sum_{j\in\Lambda}\ip{u_j,Tu_j}\Big(\sum_{i\in\Lambda_j}\ranko{v_{i,j}}{v_{i,j}}\Big)
               =\sum_{j\in\Lambda}\sum_{i\in\Lambda_j}A_{i,j}TA_{i,j}^*        
      \end{align*}
 for all $T\in\TC{\mcl{H}_1}$, where $A_{i,j}:=\ranko{v_{i,j}}{u_j}$ for all $j\in\Lambda, i\in\Lambda_j$. Since $R_kR_l=0$ for $k\neq l\in\Lambda$, the set $\{v_{i,j}: i\in\Lambda_j, j\in\Lambda\}\subseteq\mcl{H}_2$ is orthogonal. Let  $\mcl{K}=\cspan\{e_{i,j}: j\in\Lambda,i\in\Lambda_j\}\subseteq\mcl{H}_2$, where $e_{i,j}=\frac{v_{i,j}}{\norm{v_{i,j}}}$, and $W\in\B{\mcl{K},\mcl{H}_2}$ be the inclusion map. Then for all $T\in\TC{\mcl{H}_1}$,
 \begin{align*}
     \mathrm{Ad}_W\circ\Phi(T)&=\sum_{j\in\Lambda}\sum_{i\in\Lambda_j}\ip{u_j, Tu_j}\ranko{W^*v_{i,j}}{W^*v_{i,j}}\\
     &=\sum_{j\in\Lambda}\sum_{i\in\Lambda_j}\norm{v_{i,j}}^2\ip{u_j,Tu_j}\ranko{e_{i,j}}{e_{i,j}}\\
     &=\sum_{j,l\in\Lambda}\sum_{i\in\Lambda_j,k\in\Lambda_l}\ip{v_{k,l},v_{i,j}}\ip{u_j,Tu_l}\ranko{e_{i,j}}{e_{k,l}}\\
     &=\sum_{j,l\in\Lambda}\sum_{i\in\Lambda_j,k\in\Lambda_l}\tr(A_{i,j}TA_{k,l}^*)\ranko{e_{i,j}}{e_{k,l}}.
 \end{align*}
 From \eqref{eq-compl-defn-2}, it follows that $\mathrm{Ad}_W\circ\Phi\in\msc{CM}(\Phi)$. Note that $\mathrm{Ad}_{WW^*}$ is the identity map on $\ran{\Phi}$  so that $\Phi=\mathrm{Ad}_{W^*}\circ(\mathrm{Ad}_W\circ\Phi)$. Since $W^*\in\B{\mcl{H}_2,\mcl{K}}$ is a co-isometry we conclude that $\Phi\in\msc{CM}(\Phi)$. \\
 $(vi)\Rightarrow (i)$ It follows trivially from the definition.
\end{proof}

\begin{rmk}\label{rmk-Deg-SEB}
 Let $\Phi:\TC{\mcl{H}_1}\to\TC{\mcl{H}_2}$ be a SEB-map.
 \begin{enumerate}[label=(\roman*)]
     \item Suppose $\Phi$ has a decomposition \eqref{eq-Holevo-SEB-degrd} for some non-zero vectors $u_j\in\mcl{H}_1$ and $v_j\in\mcl{H}_2$. With out loss of generality assume  that $v_j$'s are unit vectors. If $\Phi$ is degradable, then through a similar lines of proof of the above theorem we can see that $\ip{v_i,v_j}=0$ whenever $\{u_i,u_j\}$ is linearly independent.
     \item Suppose $\Phi(T)=\sum_j\tr(TF_j)R_j$ for some rank-one $F_j$ and states $R_j$. If $\Phi$ is degradable and  $\{F_j:j\in\Lambda\}$ is pairwise linearly independent, from $(i)$, it follows that $R_iR_j=0$ for all $i\neq j$.
 \end{enumerate} 
\end{rmk}

\begin{rmk}\label{rmk-degr-PPT}
  From \cite[Corollary 7]{MuSi22} we have
 $$\{\Phi:\M{d_1}\to\M{d_2}\mbox{ PPT and degradable maps}\}=\{\Phi:\M{d_1}\to\M{d_2}\mbox{EB and degradable maps}\}.$$
 Thus, in the finite-dimensional case Theorem \ref{thm-DEB-char} also characterizes degradable PPT-maps. 
\end{rmk}

 Let $\mcl{H}$ be a separable Hilbert space with an orthonormal basis $\{e_j\}_{j\in\Lambda}$. Define an isometry $V:\mcl{H}\to\mcl{H}\otimes\mcl{H}$ by $V(x)=\sum_j\ip{e_j,x}e_j\otimes e_j$ for all $x\in\mcl{H}$. Then the Schur product (w.r.t. $\{e_j\}_{j\in\Lambda}$) of $A,B\in\B{\mcl{H}}$ is defined as $A\odot B:=V^*(A\otimes B)V$. Note that 
 \begin{align*}
     \ip{x,V^*(A\otimes B)Vy}
        =\ip{Vx,(A\otimes B)Vy}
        =\sum_{i,j}\Bip{x,\ip{e_i,Ae_j}\ip{e_i,Be_j}\ranko{e_i}{e_j}y},
 \end{align*}
 for all $x,y\in\mcl{H}$, so that
 $$A\odot B=\ssum_{i,j}\ip{e_i,Ae_j}\ip{e_i,Be_j}\ranko{e_i}{e_j}.$$
 (Thus, if $A,B\in\B{\mcl{H}}$ has its infinite matrix representations   $[a_{ij}]_{i,j=1}^\infty$ and $[b_{ij}]_{i,j=1}^\infty$, respectively,   then $A\odot B$ has matrix representation $[a_{ij}b_{ij}]_{i,j=1}^\infty$ w.r.t $\{e_j\}_{j\in\Lambda}$.) Clearly $A\odot B\in\B{\mcl{H}}$ with $\norm{A\odot B}\leq\norm{A}\norm{B}$. Further if $A,B\in\B{\mcl{H}}^+$, then $A\otimes B\in\B{\mcl{H}\otimes\mcl{H}}^+$ so that $A\odot B\in\B{\mcl{H}}^+$ and $\tr(A\odot B)=\sum_{j}\ip{e_j,Ae_j}\ip{e_j,Be_j}$ is finite if $B\in\TC{\mcl{H}}$.  

\begin{lem}\label{lem-Schur-map}
 Let $A\in\B{\mcl{H}}^+$ and $\{e_j\}_{j\in\Lambda}$ be an orthonormal basis for $\mcl{H}$. Then  the map  $S_{A}:\TC{\mcl{H}}\to\TC{\mcl{H}}$  defined by $$S_A(T):=A\odot T \qquad \forall ~T\in\TC{\mcl{H}}$$ is a quantum operation.
\end{lem}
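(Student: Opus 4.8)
The plan is to produce an explicit Kraus decomposition of $S_A$ and then appeal to the Kraus characterization of quantum operations stated above; this simultaneously yields trace-norm boundedness of $S_A$, so no separate estimate is required. Since $A\in\B{\mcl{H}}^+$, factor $A=B^*B$ with $B:=A^{1/2}\in\B{\mcl{H}}^+$ and set $b_{ki}:=\ip{e_k,Be_i}$ for $k,i\in\Lambda$. Expanding $\ip{e_i,Ae_j}=\ip{Be_i,Be_j}$ via the resolution of the identity $\ssum_k\ranko{e_k}{e_k}$ gives $a_{ij}:=\ip{e_i,Ae_j}=\sum_{k}\overline{b_{ki}}\,b_{kj}$. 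Because $\abs{b_{ki}}\le\norm{B}=\norm{A}^{1/2}$, for each $k\in\Lambda$ the diagonal operator $C_k\in\B{\mcl{H}}$ defined by $C_ke_i:=\overline{b_{ki}}\,e_i$ is bounded, with $\norm{C_k}\le\norm{A}^{1/2}$ and $C_k^*e_i=b_{ki}e_i$.

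I would then verify the two requirements of the Kraus characterization. For the first, a direct computation using $C_k^*e_i=b_{ki}e_i$ shows
\[
   \ip{e_i,C_kTC_k^*e_j}=\overline{b_{ki}}\,b_{kj}\,\ip{e_i,Te_j},
   \qquad T\in\TC{\mcl{H}},
\]
so summing over $k$ and invoking $a_{ij}=\sum_k\overline{b_{ki}}b_{kj}$ produces exactly the $(i,j)$-entry $\ip{e_i,(A\odot T)e_j}$ of the Schur product. For the second, $C_k^*C_k=\sum_i\abs{b_{ki}}^2\ranko{e_i}{e_i}$, and the partial sums $\sum_{k\le N}C_k^*C_k=\sum_i\big(\sum_{k\le N}\abs{b_{ki}}^2\big)\ranko{e_i}{e_i}$ form an increasing net of positive diagonal operators bounded above by $\norm{A}\,I$; by monotone convergence they converge in the strong (hence weak) operator topology to $\ssum_{k}C_k^*C_k=\sum_i a_{ii}\ranko{e_i}{e_i}\in\B{\mcl{H}}$, the diagonal part of $A$.

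With the normalization operator $\ssum_k C_k^*C_k$ bounded, the footnote accompanying the Kraus characterization ensures that $\sum_k C_kTC_k^*$ converges in trace norm for every $T\in\TC{\mcl{H}}$ and defines a quantum operation; the entrywise computation above identifies this map with $T\mapsto A\odot T=S_A(T)$, finishing the proof.

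The step needing the most care is the interchange of the two modes of convergence: the strong-operator convergence of $\ssum_k C_k^*C_k$, which governs the normalization, against the trace-norm convergence of the Kraus series $\sum_k C_kTC_k^*$. I would reconcile these through the footnote's result, and justify the final identification $S_A(T)=A\odot T$ by observing that trace-norm convergence forces convergence of each matrix entry $\ip{e_i,\cdot\,e_j}$, so the entries may legitimately be summed term by term.
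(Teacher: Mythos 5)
Your proposal is correct and follows essentially the same route as the paper: both build diagonal Kraus operators from a factorization of $A$ (you take $B=A^{1/2}$ and conjugate the entries, the paper factors $A^{\T}=B^*B$ to the same effect), verify that $\ssum_k C_k^*C_k$ is bounded, and identify $\sum_k C_kTC_k^*$ with $A\odot T$ entrywise. The only cosmetic differences are the placement of the complex conjugate and your use of monotone (Vigier) convergence where the paper gives a direct norm estimate.
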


\begin{proof}
Let $B\in\B{\mcl{H}}$ such that $A^{\T}=B^*B$, and for $k\in\Lambda$ let   $A_k\in\B{\mcl{H}}$ be the diagonal operator given by $A_k(e_j)=\ip{e_k,Be_j}e_j$ for all $j\in\Lambda$. Then for $n\geq 1$ and $x\in\mcl{H}$,
\begin{align*}
    \bnorm{\sum_{k=1}^nA_k^*A_kx}^2
    =\sum_{j}\abs{\ip{e_j,x}}^2\Big(\sum_{k}\abs{\ip{e_k,Be_j}}^2\Big)^2
    \leq \sum_{j}\abs{\ip{e_j,x}}^2 \norm{Be_j}^4
    \leq\norm{B}^4\norm{x}^2
\end{align*}
 so that $\ssum_{k}A_k^*A_k\in\B{\mcl{H}}$. Hence  $\sum_{k}A_kTA_k^*\in\TC{\mcl{H}}$ for every $T\in\TC{\mcl{H}}$. Further (since trace norm convergence implies SOT convergence) 

\begin{align}\label{eq-Schur-map}
    \sum_{k}A_kTA_k^*=\ssum_{k}A_kTA_k^*
    &=\ssum_{k}(\ssum_{i}\ip{e_k,Be_i}\ranko{e_i}{e_i})T(\ssum_{j}\ip{Be_j,e_k}\ranko{e_j}{e_j}) \notag\\
    &=\ssum_{i,j}\sum_{k}\ip{e_k,Be_i}\ip{Be_j,e_k}\ip{e_i,Te_j}\ranko{e_i}{e_j} \notag\\
    &=\ssum_{i,j}\Bip{\sum_{k}\ip{e_k,Be_j}e_k,Be_i}\ip{e_i,Te_j}\ranko{e_i}{e_j}\notag\\
    &=\ssum_{i,j}\ip{e_j,A^{\T}e_i}\ip{e_i,Te_j}\ranko{e_i}{e_j}\notag\\
    &=\ssum_{i,j}\ip{e_i,Ae_j}\ip{e_i,Te_j}\ranko{e_i}{e_j}
    =S_A(T).
\end{align}
 Thus $S_A$ is a quantum operation.
 \end{proof}

 \begin{rmk}
  Let $A\in\B{\mcl{H}}^+$, and $S_A$ be the Schur map given in the Lemma \eqref{lem-Schur-map}. Then  any $\Psi\in\msc{CM}(S_A)$ is SEB. For, write 
  $S_A(.)=\sum_{k}A_k(.)A_k^*$ where $A_k's$ are as in the above proof. Now consider the complementary map $S_A^c$ given by the equation \eqref{eq-compl-defn-2}.i.e.,

  \begin{align}\label{eq-Schr-compl-infi}
    S_A^c(T)&=\sum_{i,j}\tr(A_iTA_j^*)\ranko{e_i}{e_j}\notag\\ 
            &=\sum_{i,j}\Big(\sum_{k}\ip{A_i^*e_k,TA_j^*e_k}\Big)\ranko{e_i}{e_j}\notag\\
            &=\sum_{i,j}\Big(\sum_{k}\Bip{\ip{Be_k,e_i}e_k,T\ip{Be_k,e_j}e_k}\Big)\ranko{e_i}{e_j}\notag\\
            &=\sum_{i,j}\sum_{k}\ip{e_i,Be_k}\ip{Be_k,e_j}\ip{e_k,Te_k}\ranko{e_i}{e_j}\notag\\
            &=\sum_{k}\ip{e_k,Te_k}\ranko{z_k}{z_k}
  \end{align}
where $z_k=\sum_{j}\ip{Be_k,e_j}e_j$. Thus $S_A^c$ is SEB. Now given any $\Psi\in\msc{CM}(S_A)$ by equation \ref{eq-compl-channel-equiv}, there exists a partial isometry $V$ such that $\Psi=\mathrm{Ad}_{V}\circ S_A^c$, hence $\Psi$ is also SEB. 
  
 \end{rmk}

\begin{thm}\label{thm-Schur-SEB-antideg}
 Let $A\in\B{\mcl{H}}^+$, and $S_A$ be the Schur map given in the lemma \eqref{lem-Schur-map}. Then the following are equivalent
    \begin{enumerate}[label=(\roman*)]
        \item Any element (and, hence all) of $\msc{CM}(S_A)$ is degradable.
        \item $A$ is a diagonal operator (i.e., $\ip{e_i,Ae_j}=0, \forall i\neq j$).
         \item $S_A$ is SEB.
    \end{enumerate}
\end{thm}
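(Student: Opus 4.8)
The plan is to prove the two equivalences $(i)\Leftrightarrow(iii)$ and $(ii)\Leftrightarrow(iii)$; together these give the full cyclic equivalence. The first is essentially a formal consequence of Theorem \ref{thm-DEB-char}, the symmetry of complementarity, and the Remark preceding this theorem; the second requires an explicit computation with the Schur structure, and is where the real work lies.

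For $(i)\Leftrightarrow(iii)$, I would begin from the fact recorded in the Remark just above, that \emph{every} $\Psi\in\msc{CM}(S_A)$ is automatically SEB. Fix any such $\Psi$. Since complementarity is symmetric, $S_A\in\msc{CM}(\Psi)$; that is, $S_A$ is one of the maps complementary to the SEB-map $\Psi$. Applying the equivalence $(i)\Leftrightarrow(ii)$ of Theorem \ref{thm-DEB-char} to $\Psi$ (legitimate, as $\Psi$ is SEB) gives: $\Psi$ is degradable if and only if some---equivalently every---element of $\msc{CM}(\Psi)$ is SEB. Reading this off the distinguished element $S_A\in\msc{CM}(\Psi)$ yields that $\Psi$ is degradable if and only if $S_A$ is SEB. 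Crucially, the right-hand side does not depend on the choice of $\Psi$, so either $S_A$ is SEB and then \emph{every} element of $\msc{CM}(S_A)$ is degradable, or $S_A$ is not SEB and then none is. This is exactly the ``any (and, hence, all)'' content of $(i)$, and establishes $(i)\Leftrightarrow(iii)$.

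For $(ii)\Rightarrow(iii)$: when $A$ is diagonal, the formula $A\odot T=\ssum_{i,j}\ip{e_i,Ae_j}\ip{e_i,Te_j}\ranko{e_i}{e_j}$ collapses to $S_A(T)=\sum_i\ip{e_i,Ae_i}\ip{e_i,Te_i}\ranko{e_i}{e_i}$, a Holevo form with positive coefficient operators $\ip{e_i,Ae_i}\ranko{e_i}{e_i}$ and one-dimensional states $\ranko{e_i}{e_i}$, so $S_A$ is SEB by Theorem \ref{thm-SEB-char}. For the converse $(iii)\Rightarrow(ii)$ I would argue one off-diagonal entry at a time. Fix $i\neq j$ and let $W:\mbb{C}^2\to\mcl{H}$ be the isometry with $We_1^{(2)}=e_i$ and $We_2^{(2)}=e_j$. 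Using $S_A(\ranko{e_p}{e_q})=\ip{e_p,Ae_q}\ranko{e_p}{e_q}$, a short computation shows that the compression $\mathrm{Ad}_W\circ S_A\circ\mathrm{Ad}_{W^*}:\M{2}\to\M{2}$ is precisely the Schur map on $\M{2}$ determined by the principal submatrix $\SMatrix{\ip{e_i,Ae_i} & \ip{e_i,Ae_j}\\ \ip{e_j,Ae_i} & \ip{e_j,Ae_j}}$. As $S_A$ is SEB and the SEB property is preserved under pre- and post-composition with quantum operations (here $\mathrm{Ad}_{W^*}$ and $\mathrm{Ad}_W$), this compression is SEB, hence EB, hence PPT on $\M{2}$. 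A PPT Schur map on $\M{2}$ forces its defining matrix to be diagonal (c.f. \cite[Proposition 4.1]{KMP18}), so $\ip{e_i,Ae_j}=0$; as $i\neq j$ were arbitrary, $A$ is diagonal.

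The main obstacle I anticipate is confined to $(iii)\Rightarrow(ii)$: in infinite dimensions there is no direct global ``partial transpose'' of $S_A$ to exploit, so the positivity obstruction must be localised to the finite $2\times2$ compressions, exactly as in the proof of $(iii)\Rightarrow(iv)$ of Theorem \ref{thm-DEB-char}. The two points needing care are the verification that the compression is genuinely the claimed $2\times2$ Schur map, and the invocation of ``EB $\Rightarrow$ PPT'' that brings \cite[Proposition 4.1]{KMP18} to bear. By contrast, $(i)\Leftrightarrow(iii)$ is essentially bookkeeping once Theorem \ref{thm-DEB-char} and the symmetry of complements are in hand.
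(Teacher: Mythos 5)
Your proposal is correct, and it reorganizes the argument along genuinely different lines from the paper's. The paper proves the cycle $(i)\Rightarrow(ii)\Rightarrow(iii)\Rightarrow(i)$: for $(i)\Rightarrow(ii)$ it works with the explicit complement $S_A^c(T)=\sum_k\ip{e_k,Te_k}\ranko{z_k}{z_k}$ from \eqref{eq-Schr-compl-infi}, invokes Theorem \ref{thm-DEB-char} together with Remark \ref{rmk-Deg-SEB} to force $\ip{z_i,z_j}=0$ for $i\neq j$, and then computes the Gram-matrix identity $\ip{z_i,z_j}=\ip{e_i,Ae_j}$; the closing step $(iii)\Rightarrow(i)$ is the one-line observation that SEB implies anti-degradable. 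Your $(i)\Leftrightarrow(iii)$ replaces both of these by a purely formal argument: every $\Psi\in\msc{CM}(S_A)$ is SEB by the preceding Remark, so Theorem \ref{thm-DEB-char}$(i)\Leftrightarrow(ii)$ applied to $\Psi$, together with the symmetry $S_A\in\msc{CM}(\Psi)$, gives ``$\Psi$ degradable iff $S_A$ SEB'' uniformly in $\Psi$ --- which also accounts cleanly for the ``any, hence all'' quantifier that the paper handles by transporting degradability along the partial isometries of \eqref{eq-compl-channel-equiv}. Your $(iii)\Rightarrow(ii)$ then runs the $2\times 2$ PPT obstruction of \cite[Proposition 4.1]{KMP18} directly on compressions of $S_A$ itself rather than on its complement; the identification of $\mathrm{Ad}_W\circ S_A\circ\mathrm{Ad}_{W^*}$ with the Schur map of the principal submatrix is correct, and the degenerate case $\ip{e_i,Ae_i}=0$ is harmless since positivity of $A$ already forces the corresponding off-diagonal entries to vanish. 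What the paper's route buys is the explicit link between $A$ and the vectors appearing in its complement; what yours buys is the sharper observation that diagonality of $A$ follows from SEB-ness of $S_A$ alone, with no detour through degradability or the complementary map.
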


\begin{proof}
    $(i)\Rightarrow (ii)$ WLOG assume that $S_A^c$ is degradable, where $S_A^c$ given by the equation \eqref{eq-Schr-compl-infi} (If $\Psi\in\msc{CM}(S_A)$ is degradable, that is there is a quantum operation $\Gamma$ on $\TC{\mcl{H}}$ such that $\Gamma\circ S_A=\Psi$. Now by equation \ref{eq-compl-channel-equiv}, there exists a partial isometry $V$ such that $S_A^c=\mathrm{Ad}_{V}\circ \Psi$, hence $S_A^c=\mathrm{Ad}_{V}\circ\Gamma\circ S_A $ so that $S_A^c$ is degradable.). Then by Theorem \eqref{thm-DEB-char} and Remark\eqref{rmk-Deg-SEB} we get $\ip{z_i,z_j}=0$ for all $i\neq j$. But
    \begin{align*}
        \ip{z_i,z_j}
        &=\Bip{\sum_k\ip{Be_i,e_k}e_k,\sum_l\ip{Be_j,e_l}e_l}\\
        &=\sum_{k,l}\ip{e_k,Be_i}\ip{Be_j,e_l}\ip{e_k,e_l}\\
        &=\sum_k\ip{e_k,Be_i}\ip{Be_j,e_k}\\
        &=\ip{Be_j,Be_i}=\ip{e_j,A^{\T}e_i}=\ip{e_i,Ae_j}
    \end{align*}
 Thus $A$ is a diagonal operator. \\
 $(ii)\Rightarrow (iii)$ Suppose $A$ is a diagonal operator, then 
 $$A\odot T=\ssum_{i,j}\ip{e_i,Ae_j}\ip{e_i,Te_j}\ranko{e_i}{e_j}=\ssum_{j}\ip{e_j,Ae_j}\ip{e_j,Te_j}\ranko{e_j}{e_j}.$$
 Now, since the sequence $\bnorm{\sum_{j=1}^n\ip{e_j,Te_j}\ip{e_j,Ae_j}\ranko{e_j}{e_j}}_1$ converges to $\norm{A\odot T}_1$, from \cite[Theorem 2]{Gru73} we get 
\begin{align*}
    S_A(T)=\sum_{j}\ip{e_j,Te_j}\ip{e_j,Ae_j}\ranko{e_j}{e_j}
         =\sum_{j}\ip{u_j,Tu_j}\ranko{e_j}{e_j}
\end{align*}
 where $u_j=\sqrt{\ip{e_j,Ae_j}}e_j$. Hence $S_A$ is SEB.\\
$(iii)\Rightarrow (i)$ Assume that $S_A$ is SEB, hence anti-degradable. Thus any $\Psi\in\msc{CM}(S_A)$ is degradable.
\end{proof}

\section{Finite dimensional case}\label{sec-fd-case} 

 Through out this section $d,d_1,\cdots, d_4\in\mbb{N}$. We let $\{e_j^{(d)}\}_{j=1}^d\subseteq\mbb{C}^d$ denotes the standard orthonormal basis and $\mathrm{CP}(d_1,d_2)$ denotes the set of all CP-maps from $\M{d_1}$ into $\M{d_2}$. Recall that the Choi-rank of $\Phi\in\mathrm{CP}(d_1,d_2)$, denoted by $CR(\Phi)$, is the minimum number of Kraus operators required to represent $\Phi$. Given any  $\Phi\in\mathrm{CP}(d_1,d_2)$  there always exists a complementary map $\Psi\in\mathrm{CP}(d_1,r)$ with $r=CR(\Phi)\leq d_1d_2$. In fact, if $\Phi$ has a Kraus decomposition $\Phi=\sum_{i=1}^r\mathrm{Ad}_{V_i}$, then the map $\Phi^c_{\min}\in \mathrm{CP}(d_1,r)$ defined by
 \begin{align}\label{eq-compl-choirk-cp}
     \Phi_{\min}^c(X):=\sum_{i,j=1}^{r}\tr(V_i^*XV_j)\ranko{e_i^{(r)}}{e_j^{(r)}}=\sum_{k=1}^{d_2}\wtilde{V}_k^*X\wtilde{V_k},
 \end{align}
 is complementary to $\Phi$  (c.f \cite{Hol07}), where $\wtilde{V}_k=\sum_{i=1}^{d_1}\sum_{j=1}^{r}\bip{e_i^{(d_1)},V_je_k^{(d_2)}}\ranko{e_i^{(d_1)}}{e_j^{(r)}}\in\M{d_1\times r}$.
 
 If $\Psi\in\mathrm{CP}(d_1,d_3)$ is complementary to $\Phi$, then so is $\mathrm{Ad}_{V^*}\circ\Psi\in\mathrm{CP}(d_1,d_4)$ for every isometry $V\in\M{d_4\times d_3}$ and $d_4\geq d_3$. On the other hand, if $\Psi_1\in\mathrm{CP}(d_1,d_3)$ and $\Psi_2\in\mathrm{CP}(d_1,d_4)$ are two complementary maps of $\Phi$ and assume $d_3\leq d_4$, then there exists an isometry $V\in\M{d_4\times d_3}$ such that 
 \begin{align}\label{eq-compl-channel-equiv-f-dl}
    \Psi_2=\mathrm{Ad}_{V^*}\circ\Psi_1
    \qquad\mbox{and}\qquad
    \Psi_1=\mathrm{Ad}_{V}\circ\Psi_2
 \end{align}
 In fact, if $A_1:\mbb{C}^{d_1}\to \mbb{C}^{d_2}\otimes\mbb{C}^{d_3}$ and  $A_2:\mbb{C}^{d_1}\to \mbb{C}^{d_2}\otimes\mbb{C}^{d_4}$ are bounded linear such that 
 \begin{align*}
     \Phi(X)=(\id\otimes\tr)(A_jXA_j^*)
     \qquad\mbox{and}\qquad
     \Psi_j(X)=(\tr\otimes\id)(A_jXA_j^*),~~j=1,2,
 \end{align*}
 then there exists an isometry $V\in\M{d_4\times d_3}$ such that $A_2=(I_{d_2}\otimes V)A_1$
 and $A_1=(I_{d_2}\otimes V^*)A_2$. See \cite[Corollary 2.24]{Wat18}  for details.  From \eqref{eq-compl-channel-equiv-f-dl} it follows that maps that are complementary to $\Phi$ has the same Choi-rank. In fact, since   rank$(\Phi_{\min}^c(I))=r=CR(\Phi)$, from \eqref{eq-compl-channel-equiv-f-dl} it follows that $CR(\Psi)=rank(\Phi(I))\leq d_2$ for all $\Psi\in\mathrm{CP}(d_1,d_3)$ is complementary to $\Phi$. Interchanging the role of $\Phi$ and $\Psi$ we get $r\leq d_3$. Thus, from the above discussions, we have
 \begin{align*}
     \msc{CM}(\Phi)=\{\mathrm{Ad}_{V^*}\circ\Phi_{\min}^c: V\in\M{d_3\times r}, V^*V=I_r\},   
 \end{align*}
 where $\Phi_{\min}^c$ is as in \eqref{eq-compl-choirk-cp}.
 
 Let $\mathrm{UEB}(d_1,d_2)$ denotes the set of unital EB-maps from $\M{d_1}$ into $\M{d_2}$.   A linear map $\Phi\in\mathrm{UEB}(d_1,d_2)$ is said to be a \emph{$C^*$-extreme point} of $\mathrm{UEB}(d_1,d_2)$ if whenever $\Phi$ is written as  $\Phi=\sum_{i=1}^n \mathrm{Ad}_{T_i}\circ\Phi_i$, where $T_i\in\M{d_2}$ are invertible with $\sum_{i=1}^nT_i^*T_i=I_{d_2}$ and $\Phi_i\in\mathrm{UEB}(d_1,d_2)$, then  there exist unitaries $U_i\in\M{d_2}$ such that $\Phi_i=\mathrm{Ad}_{U_i}\circ\Phi$ for all $1\leq i\leq n$. In \cite{BDMS23}, Bhat et al. proved that $\Phi$ is a $C^*$-extreme point of $\mathrm{UEB}(d_1,d_2)$ if and only if $CR(\Phi)=d_2$ if and only if $\Phi=\sum_{i=1}^{d_2}\mathrm{Ad}_{\ranko{u_i}{v_i}}$ for some unit vectors $\{u_i\}_{i=1}^{d_2}\subseteq\mbb{C}^{d_1}$ and an orthonormal basis $\{v_i\}_{i=1}^{d_2}\subseteq\mbb{C}^{d_2}$.

\begin{thm}\label{thm-degr-Cstar-ext}
  Given $\Phi\in\mathrm{UEB}(d_1,d_2)$ the following are equivalent:
  \begin{enumerate}[label=(\roman*)]
      \item $\Phi$ is degradable.
      \item $\Phi$ is a $C^*$-extreme point of $\mathrm{UEB}(d_1,d_2)$.
  \end{enumerate}
\end{thm}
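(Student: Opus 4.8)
The plan is to reduce both implications to the characterization of degradable SEB-maps in Theorem \ref{thm-DEB-char}, which applies directly here: in finite dimensions every EB-map is SEB, so every $\Phi\in\mathrm{UEB}(d_1,d_2)$ is a SEB quantum operation. Throughout I will use the characterization of $C^*$-extreme points recalled above from \cite{BDMS23}, namely that $\Phi$ is $C^*$-extreme in $\mathrm{UEB}(d_1,d_2)$ if and only if $CR(\Phi)=d_2$, equivalently $\Phi=\sum_{i=1}^{d_2}\mathrm{Ad}_{\ranko{u_i}{v_i}}$ for unit vectors $\{u_i\}_{i=1}^{d_2}\subseteq\mbb{C}^{d_1}$ and an orthonormal basis $\{v_i\}_{i=1}^{d_2}\subseteq\mbb{C}^{d_2}$.

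For $(ii)\Rightarrow(i)$ I would argue as follows. If $\Phi$ is $C^*$-extreme, then $\Phi(X)=\sum_{i=1}^{d_2}\ip{u_i,Xu_i}\ranko{v_i}{v_i}$, which is exactly the Holevo form (v) of Theorem \ref{thm-DEB-char} with rank-one positive operators $F_i=\ranko{u_i}{u_i}$ and states $R_i=\ranko{v_i}{v_i}$. Since $\{v_i\}$ is orthonormal, $R_iR_j=\ip{v_i,v_j}\ranko{v_i}{v_j}=0$ for $i\neq j$, so condition (v) is satisfied and $\Phi$ is degradable. This direction is immediate.

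The substance is $(i)\Rightarrow(ii)$. Assuming $\Phi$ degradable, Theorem \ref{thm-DEB-char}(v) gives $\Phi(T)=\sum_{k\in\Lambda}\tr(TF_k)R_k$ with rank-one $F_k=\ranko{u_k}{u_k}$, states $R_k$, and $R_iR_j=0$ for $i\neq j$; as the $R_k$ then have mutually orthogonal ranges in $\mbb{C}^{d_2}$, the set $\Lambda$ is finite. Let $P_k$ be the support projection of $R_k$, so the $P_k$ are mutually orthogonal. The crucial step is to exploit unitality: compressing $\Phi(I)=\sum_k\tr(F_k)R_k=I_{d_2}$ by $P_k$ and using $P_kR_lP_k=0$ for $l\neq k$ yields $\tr(F_k)R_k=P_k$. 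Hence $R_k=\tfrac{1}{\tr(F_k)}P_k$ is the normalized projection onto its range, and since $\tr(R_k)=1$ we get $\tr(F_k)=\mathrm{rank}(P_k)=:r_k$; moreover $\sum_k P_k=\Phi(I)=I_{d_2}$, so $\sum_k r_k=d_2$. Expanding each $P_k=\sum_{i\in I_k}\ranko{w_{k,i}}{w_{k,i}}$ over an orthonormal basis of its range produces an orthonormal basis $\{w_{k,i}\}$ of $\mbb{C}^{d_2}$ of cardinality $d_2$, and setting $\hat u_k=u_k/\norm{u_k}$ (a unit vector, as $\norm{u_k}^2=\tr(F_k)=r_k$) gives $\Phi(T)=\sum_k\sum_{i\in I_k}\ip{\hat u_k,T\hat u_k}\ranko{w_{k,i}}{w_{k,i}}$. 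Relabelling the $d_2$ pairs $(k,i)$ exhibits $\Phi$ with $d_2$ Kraus operators $\ranko{w_{k,i}}{\hat u_k}$; these are Hilbert--Schmidt orthonormal because the $w_{k,i}$ are, hence linearly independent, so $CR(\Phi)=d_2$ and $\Phi$ is $C^*$-extreme.

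I expect the main obstacle to be precisely the middle step of $(i)\Rightarrow(ii)$: upgrading the merely orthogonal-support structure furnished by degradability into a genuine orthonormal basis of $\mbb{C}^{d_2}$, that is, showing that unitality forces each $R_k$ to be a normalized projection and that the ranks $r_k$ sum to exactly $d_2$. Once this is in place the remainder is bookkeeping with the Holevo and Kraus forms already available.
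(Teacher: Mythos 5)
Your proposal is correct and follows essentially the same route as the paper: both directions are reduced to Theorem \ref{thm-DEB-char} together with the \cite{BDMS23} characterization of $C^*$-extreme points, and your key step — using unitality and $R_iR_j=0$ to force each $\tr(F_k)R_k$ to be a projection (via compression by support projections, where the paper uses an equivalent eigenvalue argument) — is exactly the paper's middle step. The explicit construction of the orthonormal basis $\{w_{k,i}\}$ and the count $CR(\Phi)=d_2$ is sound but only repackages what the cited \cite[Theorem 5.3]{BDMS23} already provides.
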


\begin{proof}
 $(i)\Rightarrow (ii)$ Assume that $\Phi$ is degradable. Then from  Theorem \ref{thm-DEB-char}(v),  there exist rank-one positive operators $\{F_j\}_{j=1}^n\subseteq\M{d_1}$ and states $\{R_j\}_{j=1}^n\subseteq\M{d_2}$ with $R_iR_j=0$ for all $i\neq j$ such that 
    \begin{align*}
        \Phi(X)=\sum_{j=1}^n\tr(XF_j)R_j,\qquad\forall~X\in\M{d_1}. 
    \end{align*}
 Let $F_k=\ranko{u_k}{u_k}$ and $P_k=\tr(F_k)R_k$ for all $1\leq k\leq n$. Clearly $P_kP_l=0$ for all $k\neq l$ and $\sum_{k=1}^nP_k=\Phi(I)=I$. Now let $0\neq \lambda\in\mbb{C}$ and $0\neq x\in\mbb{C}^d$ be such that $P_kx=\lambda x$. Then for all $j\neq k$ we have $P_j(x)=P_j(\frac{1}{\lambda}P_k(x))=0$. Hence $x=\sum_{j=1}^nP_j(x)=P_k(x)=\lambda x$ so that $\lambda=1$. Thus $0,1$ are only possible eigenvalues of $P_k$. Since $P_k^*=P_k$ we conclude that $P_k$'s are mutually orthogonal projections. Taking $\widetilde{u}_j=\frac{u_j}{\norm{u_j}}$ we have 
 \begin{align*}
     \Phi(X)=\sum_{j=1}^n\ip{\tilde{u}_j,X\tilde{u}_j}P_j,\qquad\forall~X\in\M{d_1},
 \end{align*}
 so that, from \cite[Theorem 5.3(v)]{BDMS23},  $\Phi$ is a $C^*$-extreme point of $\mathrm{UEB}(d_1,d_2)$.\\
 $(ii)\Rightarrow (i)$ Follows from \cite[Theorem 5.3 (vi)]{BDMS23} and Theorem \ref{thm-DEB-char}(iv).
\end{proof}

\begin{cor}\label{degr-adj-degr}
 Let $\Phi: \M{d}\to\M{d}$ be a unital  trace-preserving EB-map. Then $\Phi$ is degradable if and only if $\Phi^*$ is degradable.
\end{cor}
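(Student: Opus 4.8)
\emph{Plan.} The plan is to reduce the statement to the explicit description of $C^*$-extreme points of $\mathrm{UEB}(d,d)$ and then to exploit the extra rigidity that trace-preservation imposes. First I would record that the hypotheses are self-dual: since $\Phi$ is trace-preserving its dual $\Phi^*$ is unital, since $\Phi$ is unital $\Phi^*$ is trace-preserving, and the dual of an EB-map is again EB. Indeed, writing $\Phi$ in the Holevo form of Theorem \ref{thm-SEB-char}, $\Phi(T)=\sum_j\tr(TF_j)R_j$, one computes $\Phi^*(X)=\sum_j\tr(XR_j)F_j$, which is again of Holevo form, hence EB. Thus $\Phi^*$ lies in exactly the same class as $\Phi$, and it suffices to prove a single implication: \emph{if $\Phi$ is degradable then $\Phi^*$ is degradable.} The converse then follows by applying this implication to $\Phi^*$ and using $\Phi^{**}=\Phi$.

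For the forward implication, suppose $\Phi$ is degradable. By Theorem \ref{thm-degr-Cstar-ext}, $\Phi$ is a $C^*$-extreme point of $\mathrm{UEB}(d,d)$, so by the structure theorem of Bhat et al.\ \cite{BDMS23} there are unit vectors $\{u_i\}_{i=1}^d\subseteq\mbb{C}^d$ and an orthonormal basis $\{v_i\}_{i=1}^d$ of $\mbb{C}^d$ with
\begin{align*}
 \Phi(X)=\sum_{i=1}^d\mathrm{Ad}_{\ranko{u_i}{v_i}}(X)=\sum_{i=1}^d\ip{u_i,Xu_i}\ranko{v_i}{v_i},\qquad\forall~X\in\M{d}.
\end{align*}
The key step is to use trace-preservation to upgrade $\{u_i\}$ from a family of unit vectors to an orthonormal basis. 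Since $\tr(\Phi(X))=\sum_i\ip{u_i,Xu_i}=\tr\big(X\sum_i\ranko{u_i}{u_i}\big)$ must equal $\tr(X)$ for every $X$, we get $\sum_{i=1}^d\ranko{u_i}{u_i}=I$. As this is a sum of $d$ rank-one projections equal to the identity on $\mbb{C}^d$, comparing $\tr\big((\sum_i\ranko{u_i}{u_i})^2\big)=\tr(I)=d$ with the expansion $d+\sum_{i\neq j}\abs{\ip{u_i,u_j}}^2$ forces $\ip{u_i,u_j}=0$ for $i\neq j$; hence $\{u_i\}_{i=1}^d$ is an orthonormal basis.

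It then remains to compute the dual and recognize its form. From the displayed expression, $\tr(\Phi^*(X)T)=\tr(X\Phi(T))=\sum_i\ip{u_i,Tu_i}\ip{v_i,Xv_i}$ yields
\begin{align*}
 \Phi^*(X)=\sum_{i=1}^d\ip{v_i,Xv_i}\ranko{u_i}{u_i},\qquad\forall~X\in\M{d}.
\end{align*}
Since both $\{v_i\}$ and $\{u_i\}$ are now orthonormal bases, this is again of the $C^*$-extreme form described in \cite{BDMS23}, so $\Phi^*$ is a $C^*$-extreme point of $\mathrm{UEB}(d,d)$ and therefore degradable by Theorem \ref{thm-degr-Cstar-ext}. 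The main obstacle is precisely the upgrade carried out in the key step: for a general degradable unital EB-map the vectors $u_i$ need only be unit vectors while $\{v_i\}$ is orthonormal, in which case $\Phi^*$ need not be degradable; it is the trace-preserving hypothesis that restores the symmetry between the input frame $\{u_i\}$ and the output basis $\{v_i\}$ under duality and makes the equivalence hold.
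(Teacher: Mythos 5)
Your proof is correct, and it reaches the same intermediate waypoint as the paper --- both arguments reduce the corollary to Theorem \ref{thm-degr-Cstar-ext} and then show that $C^*$-extremality passes to the dual --- but the mechanism for that last transfer is genuinely different. The paper's proof is a one-liner: since $CR(\Phi)=CR(\Phi^*)$ and, by \cite[Theorem 5.3(iv)]{BDMS23}, a map in $\mathrm{UEB}(d)$ is a $C^*$-extreme point precisely when its Choi-rank equals $d$, the property is automatically self-dual; the structural form of $\Phi$ is never touched. You instead invoke the explicit rank-one Kraus description $\Phi=\sum_i\mathrm{Ad}_{\ranko{u_i}{v_i}}$, use trace-preservation to force $\sum_i\ranko{u_i}{u_i}=I$, and then run the standard trace computation $\tr\bigl((\sum_iP_i)^2\bigr)=d+\sum_{i\neq j}\abs{\ip{u_i,u_j}}^2$ to upgrade the unit vectors $u_i$ to an orthonormal basis, after which $\Phi^*$ visibly has the $C^*$-extreme form. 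Your route is longer but buys more: it produces the explicit formula $\Phi^*(X)=\sum_i\ip{v_i,Xv_i}\ranko{u_i}{u_i}$, and it isolates exactly where trace-preservation enters --- which is instructive given that the paper's very next example ($\Phi(X)=\ip{e_1,Xe_1}I$) shows the corollary fails without it. The paper's route buys brevity and avoids any computation, at the cost of leaning on the Choi-rank characterization as a black box. All the individual steps you take (self-duality of the hypotheses, the reduction to one implication via $\Phi^{**}=\Phi$, the duality computation) check out against the conventions of the paper, so there is no gap.
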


\begin{proof}
 Given that $\Phi,\Phi^*\in\mathrm{UEB}(d)$. Since $CR(\Phi)=CR(\Phi^*)$, from \cite[Theorem 5.3 (iv)]{BDMS23}, we have $\Phi$ is a $C^*$-extreme point if and only if $\Phi^*$ is a $C^*$-extreme point. Hence, from Theorem \ref{thm-degr-Cstar-ext}, $\Phi$ is degradable if and only if $\Phi^*$ is degradable.
\end{proof}

 
\begin{eg}
 Consider the EB-map $\Phi:\M{d}\to\M{d}$ defined by 
 $$\Phi(X)=\ip{e_1,Xe_1}I,\qquad\forall~X\in\M{d},$$
 which is unital but not trace-preserving. Note that $\Phi$ is degradable but $\Phi^*$ is not degradable. (In other words, $\Phi^*(X)=\tr(X)\ranko{e_1}{e_1}$ is a trace preserving non unital non-degradable EB-map with $(\Phi^*)^*=\Phi$ is degradable.)
\end{eg}

\begin{eg}\label{eq-degr-dual-degr-counter-eg}
 Consider the unital trace preserving EB-map  $\Phi:\M{3}\to\M{2}$ defined by 
 $$\Phi(X)=\sum_{j=1}^3\ip{e_j,Xe_j}\ranko{v_j}{v_j},$$
 where $v_1=e_1,v_2=v_3=\frac{e_2}{\sqrt{2}}\in\mbb{C}^2$. From, Theorem\ref{thm-DEB-char}, $\Phi$ is not degradable, but $\Phi^*$ is a $C^*$-extreme point, and hence degradable. 
\end{eg}
 
\begin{eg}\label{eq-Schur-degr} 
 Let $A\in\M{d}^+$ and  $S_{A}:\M{d}\to\M{d}$  be the Schur map. Since $A\in\M{d}^+$ there exist $\{v_j\}_{j=1}^{d}\subseteq\mbb{C}^{d}$ such that $A=[\ip{v_i,v_j}]$.  Write $v_j=(v_{1j},v_{2j},\cdots,v_{dj})^{\T}\in\mbb{C}^{d}$ and let $A_k=\sum_{j=1}^{d}v_{kj}\ranko{e_j}{e_j}$ for all $1\leq k\leq d$. Then $S_A=\sum_{k=1}^{d}\mathrm{Ad}_{A_k}$ so that 
 \begin{align*}
     S_A^c(X)=\sum_{k,l=1}^{d}\tr(A_k^*XA_l)\ranko{e_k}{e_l}=\sum_{j=1}^{d}\ip{e_j,Xe_j}\ranko{\ol{v_j}}{\ol{v_j}},\qquad\forall~X\in\M{d}.
 \end{align*}
 Clearly $S_A^c$ is EB.  Note that the Choi-matrix $C_{S_A^c}=\sum_{j}E_{jj}\otimes\ranko{\ol{v_j}}{\ol{v_j}}$ is a projection if and only if $v_j$'s are unit vectors if and only if diagonals of $A$ are $1$. Further,
 \begin{align*}
     (S_A^c)^*(X)=\sum_{j=1}^{d}\ip{\ol{v_j},X\ol{v_j}}\ranko{e_j}{e_j},\qquad\forall~X\in\M{d}
 \end{align*}
 is unital if and only if $v_j$'s are unit vectors. Thus, from \cite{BDMS23}, if follows that $(S_A^c)^*$ is a  $C^*$-extreme point of $\mathrm{UEB}(d)$ if and only if diagonals of $A$ are $1$  if and only if $C_{S_A^c}$ is a projection. 
\end{eg}

 Given an EB-map $\Phi:\M{d_1}\to\M{d_2}$ the minimum number of rank-one Kraus operators required to represent $\Phi$ as in \eqref{eq-Kraus-decomp-SEB} is known as the \it{entanglement breaking rank} of $\Phi$, and we denoted it as ER$(\Phi)$. From \cite{Hor97,HSR03} we have the following: 
 \begin{itemize}
     \item If $\Phi$ is unital then $d_2\leq CR(\Phi)\leq ER(\Phi)\leq (d_1d_2)^2$.
     \item If $\Phi$ is trace preserving then $d_1\leq CR(\Phi)\leq ER(\Phi)\leq (d_1d_2)^2$.
 \end{itemize} 
 The following theorem is a stronger version of \cite[Theorem 3.6]{KLPR22}. 

\begin{thm}\label{thm-PPT-proj}
 Let $\Phi:\M{d_1}\to \M{d_2}$ be a PPT-channel with rank$(\Phi(I))\leq d_1$ (this is the case if $d_2\leq d_1$).  Then the following are equivalent:
    \begin{enumerate}[label=(\roman*)]
        \item $C_{\Phi}$ is a projection.
        \item $\Phi$ is EB with  ER$(\Phi)=d_1$ (and hence $CR(\Phi)=d_1$).
        \item $\Phi^*$ is a $C^*$-extreme point of $\mathrm{UEB}(d_2,d_1)$.
        \item There exist unit vectors $\{v_j\}_{j=1}^{d_1}\subseteq\mbb{C}^{d_2}$ and an orthonormal basis $\{u_j\}_{j=1}^{d_1}\subseteq\mbb{C}^{d_1}$ such that 
        \begin{align}\label{eq-PPT-degrdble}
            \Phi(X)=\sum_{j=1}^{d_1}\ip{u_j,Xu_j}\ranko{v_j}{v_j},\qquad\forall~X\in\M{d_1}.
        \end{align}
        \item There exist $A\in\M{d_1}$ with diagonal entries equals $1$ and a unitary $U\in\M{d_1}$ such that
        \begin{align}\label{eq-Phi-Schur}
            \Phi= S_{A}^c\circ\mathrm{Ad}_{U}  
        \end{align}     
    \end{enumerate}
 Further, if $d_1=d_2$ and $\Phi$ is unital, then above conditions are equivalent to the following:
 \begin{enumerate}
     \item [(vi)] $\Phi$ is degradable. 
 \end{enumerate}
\end{thm}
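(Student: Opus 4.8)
The plan is to adjoin the sixth condition to the already-established chain (i)--(v) by proving the single equivalence (iv) $\Leftrightarrow$ (vi) under the standing assumptions $d_1=d_2=:d$ and $\Phi$ unital. The whole argument hinges on one elementary observation, which I would isolate first: if $\{w_j\}_{j=1}^{d}$ are unit vectors in $\mbb{C}^{d}$ with $\sum_{j=1}^{d}\ranko{w_j}{w_j}=I$, then $\{w_j\}$ is an orthonormal basis. Indeed, writing $W$ for the square matrix whose $j$-th column is $w_j$, the hypothesis reads $WW^{*}=I$, so $W$ is unitary and hence $W^{*}W=I$, i.e. $\ip{w_i,w_j}=\delta_{ij}$. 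This is precisely what promotes the ``unit vector'' data appearing in the $C^{*}$-extreme/Holevo descriptions to genuine orthonormal bases once unitality and trace-preservation are both imposed.

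For (iv) $\Rightarrow$ (vi): given $\Phi(X)=\sum_{j=1}^{d}\ip{u_j,Xu_j}\ranko{v_j}{v_j}$ with $\{u_j\}$ an orthonormal basis of $\mbb{C}^{d}$ and $\{v_j\}$ unit vectors, I would first evaluate unitality, $I=\Phi(I)=\sum_{j}\ranko{v_j}{v_j}$ (using $\norm{u_j}=1$), so the observation above forces $\{v_j\}$ to be an orthonormal basis. Since then $\ip{v_i,v_j}=0$ for every $i\neq j$ while $\{u_i,u_j\}$ is linearly independent for every $i\neq j$, the form in (iv) meets the orthogonality requirement of Theorem \ref{thm-DEB-char}(iv); as this $\Phi$ is SEB (finite dimensions, so EB${}={}$SEB), that theorem yields degradability of $\Phi$.

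For (vi) $\Rightarrow$ (iv): assuming $\Phi$ degradable, I would use that $\Phi$ is a PPT-channel together with Remark \ref{rmk-degr-PPT} to conclude that $\Phi$ is EB, hence $\Phi\in\mathrm{UEB}(d,d)$. Theorem \ref{thm-degr-Cstar-ext} then makes $\Phi$ a $C^{*}$-extreme point of $\mathrm{UEB}(d,d)$, so by the characterization of Bhat et al. \cite{BDMS23} one may write $\Phi(X)=\sum_{j=1}^{d}\ip{u_j,Xu_j}\ranko{v_j}{v_j}$ with $\{u_j\}$ unit vectors and $\{v_j\}$ an orthonormal basis. The point still missing for (iv) is that the \emph{input} vectors $\{u_j\}$ form an orthonormal basis; this I would extract from trace-preservation, since $\tr X=\tr\Phi(X)=\tr\big(X\sum_{j}\ranko{u_j}{u_j}\big)$ for all $X$ gives $\sum_{j}\ranko{u_j}{u_j}=I$, and the observation again forces $\{u_j\}$ to be an orthonormal basis. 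This is exactly the form required in (iv).

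Since (i)--(v) have already been shown equivalent, the two implications above make (vi) equivalent to each of them. The only genuinely delicate point is the bookkeeping of which side is orthonormalized by which normalization: unitality orthonormalizes the output vectors $v_j$, trace-preservation orthonormalizes the input vectors $u_j$, and one needs \emph{both} in order to land inside condition (iv) and simultaneously trigger the orthogonality criterion of Theorem \ref{thm-DEB-char}. Beyond these frame identities I expect no computational difficulty; the essential work has been done in the earlier theorems.
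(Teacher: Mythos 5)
The part of the theorem you actually address --- the equivalence of (vi) with the remaining conditions when $d_1=d_2$ and $\Phi$ is unital --- is handled correctly. Your direct route (iv)$\Leftrightarrow$(vi) is a mild variant of the paper's argument, which instead proves (iii)$\Leftrightarrow$(vi) by passing to the dual: the paper combines Theorem \ref{thm-degr-Cstar-ext} with Corollary \ref{degr-adj-degr} (a unital trace-preserving EB-map is degradable iff its adjoint is), whereas you stay on the side of $\Phi$ itself, use unitality to orthonormalize the $v_j$ and trace-preservation to orthonormalize the $u_j$, and then invoke Theorem \ref{thm-DEB-char}(iv). Both are sound, and your frame identity (that $d$ vectors in $\mbb{C}^d$ resolving the identity must be an orthonormal basis) is correct.

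The genuine gap is that the statement to be proved is the whole theorem, and the chain (i)$\Rightarrow$(ii)$\Rightarrow$(iii)$\Rightarrow$(iv)$\Rightarrow$(v)$\Rightarrow$(i) is not ``already established'' anywhere --- it is the substantive content of this theorem and is proved inside its proof in the paper. Nothing in your proposal supplies, for instance, (i)$\Rightarrow$(ii): one must compute $\tr(C_\Phi)=\tr(\Phi(I))=d_1$, note that a projection satisfies $\mathrm{rank}(C_\Phi)=\tr(C_\Phi)$, and then invoke the Horodecki--Shor--Ruskai lemmas for PPT maps to conclude that $\Phi$ is EB with $d_1\leq CR(\Phi)\leq ER(\Phi)\leq d_1$; this is exactly where the PPT hypothesis and the assumption $\mathrm{rank}(\Phi(I))\leq d_1$ enter, and without it condition (vi) cannot even be connected to (i). Likewise (v)$\Rightarrow$(i) requires the computation of $C_{S_A^c}$ from Example \ref{eq-Schur-degr} showing it is a projection precisely when the diagonal of $A$ is $1$, and (iv)$\Rightarrow$(v) requires exhibiting $A=[\ip{\ol{v_i},\ol{v_j}}]$ and the unitary $U$ sending $e_i$ to $u_i$. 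Until these implications are written out, your argument establishes only the final addendum, not the theorem.
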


\begin{proof}
 $(i)\Rightarrow (ii)$ Assume that $C_{\Phi}$ is a projection so that rank$(C_{\Phi})=\tr(C_{\Phi})$. But 
   \begin{align*}
       \tr(C_{\Phi})
       =\tr((\tr\otimes\id)C_{\Phi})
       =\tr\Big(\sum_{i,j=1}^d\tr(\ranko{e_i}{e_j})\otimes\Phi(\ranko{e_i}{e_j})\Big)
       =\tr(\Phi(I))
       =d_1.
   \end{align*}
  Since 
  \begin{align*}
       \max\{\mbox{rank}((\id\otimes\tr)(C_\Phi)),\mbox{rank}((\tr\otimes\id)(C_\Phi))\}
       =\max\{\mbox{rank}(I),\mbox{rank}(\Phi(I))\}
       =d_1,
  \end{align*}
 from \cite[Lemma 7\&8]{HSR03}, $\Phi$ is EB and  $d_1\leq$ CR$(\Phi)\leq$ ER$(\Phi)\leq d_1$.\\
 $(ii)\Rightarrow (iii)$ Assume that $\Phi$ is EB and ER$(\Phi)=d_1$. Then $\Phi^*\in\mathrm{UEB}(d_2,d_1)$ with ER$(\Phi^*)=$ ER$(\Phi)=d_1$, hence from \cite[Theorem 5.3]{BDMS23} we have $\Phi^*$ is a $C^*$-extreme point of $\mathrm{UEB}(d_2,d_1)$.\\
 $(iii)\Rightarrow (iv)$ Follows from \cite[Theorem 5.3]{BDMS23}.\\ 
 $(iv)\Rightarrow(v)$ Assume $\Phi$ is as in \eqref{eq-PPT-degrdble}. Let $U\in\M{d_1}$ be the unitary such that $U(e_i)=u_i$ for all $1\leq i\leq d_1$ and $A=[\ip{\ol{v_i},\ol{v_j}}]\in\M{d_1}$. Then, from Example \ref{eq-Schur-degr}, it follows that $\Phi=S_A^c\circ\mathrm{Ad}_{U}$. \\
 $(v)\Rightarrow (i)$ Suppose $\Phi$ is as in \eqref{eq-Phi-Schur}. Since diagonals of $A$ are $1$, we have $C_{S_A^c}$ is a projection, and hence $C_\Phi=(U^{\T}\otimes I)^*C_{S_A^c}(U^{\T}\otimes I)$ is also a projection. \\
 $(iii)\Leftrightarrow (vi)$ Let $\Phi:\M{d}\to\M{d}$ be a unital PPT-channel. Assume that $\Phi^*\in\mathrm{UEB}(d)$ is a $C^*$-extreme point. Then from the Theorem \eqref{thm-degr-Cstar-ext} and Corollary \eqref{degr-adj-degr}, $\Phi$ is degradable. Conversely assume that $\Phi$ is degradable. Then from  \cite[Corollary 7]{MuSi22} we get $\Phi$ is EB, and hence from Theorem \eqref{thm-degr-Cstar-ext} and Corollary \eqref{degr-adj-degr} it follows that $\Phi^*$ is a $C^*$-extreme point of $\mathrm{UEB}(d)$.
\end{proof}
 
 In the above Theorem, one may replace \eqref{eq-Phi-Schur} by  $\Phi=\mathrm{Ad}_{V^*}\circ (S_A)_{\min}^c\circ\mathrm{Ad}_U$ where $U\in\M{d_1}$ is a unitary, $V\in\M{d_3\times r}$ is an isometry and $r=CR(S_A)$. Example \ref{eq-degr-dual-degr-counter-eg} shows that, in Theorem \ref{thm-PPT-proj}, $(vi)$ is not equivalent to any of the other statements when $d_1\neq d_2$. 
 
  Note that  that if $\Phi\in\mathrm{CP}(d_1,d_2)$ is unital, then the CP-map $\Phi^c_{\min}$ need not be unital. From \cite[Lemma 2.4]{KLPR22} we have $\Phi^c_{\min}$ is unital if and only if  $C_{\Phi}$ is a projection. So we ask what are all CP-maps $\Phi$ for which $C_\Phi$ is a projection. If $d_2\leq d_1$, then Theorem \ref{thm-PPT-proj} says that
\begin{align*}
    \{\Phi:\M{d_1}\to\M{d_2}\mbox{ PPT with }C_\Phi\mbox{ is projection}\}
    =\{\Phi:\M{d_1}\to\M{d_2}\mbox{ EB with }C_\Phi\mbox{ is projection}\},
\end{align*}
 and further characterize the elements of the first set.

\begin{rmk}
 Let $\Phi:\M{d_1}\to\M{d_2}$ be a PPT-channel with rank$(\Phi(I))\leq d_1$. If ${\Phi}$ is degradable, then $(\Phi^c)^*$ is a $C^*$-extreme point of $\mathrm{UEB}(d_2,d_1)$. For, since ${\Phi}$ is degradable, \cite[Corollary 7]{MuSi22} we have both $\Phi$ and $\Phi^c$ are EB.  As 
 $$d_1\leq CR((\Phi^c)^*)=CR(\Phi^c)=rank(\Phi(I))\leq d_1,$$  from \cite[Theorem 5.3]{BDMS23} we get $(\Phi^c)^*$ is a $C^*$-extreme point.
\end{rmk}

\begin{eg}
 Given $\lambda\in [-1, 1/d]$ and $d>1$ consider the map $\mcl{W}_\lambda:\M{d}\to\M{d}$ given by 
 $$\mcl{W}_\lambda(X)=\frac{1}{d-\lambda}(\tr(X)I-\lambda X^{\T}),$$
 which is an unital trace preserving EB-map (\cite{WeHo02}). Consider the Choi-matrix
 $$C_{\mcl{W}_\lambda}=\frac{1}{d-\lambda}\big((I\otimes I)-\lambda F\big),$$
 where $F\in\B{\mbb{C}^d\otimes\mbb{C}^d}$ is the flip operator, and satisfies $F^2=I_d\otimes I_d$. Since $C_{\mcl{W}_{\lambda}}$ is not a projection $\mcl{W}_{\lambda}$ is not degradable.   
\end{eg}

\begin{eg}
 Given $\lambda\in[-\frac{1}{d+1},1]$ the map $\Phi_{\lambda,d}:\M{d}\to\M{d}$ defined by   
  \begin{align}\label{eq-Phi-lambda}
      \Phi_{\lambda,d}(X):=\frac{1}{2\lambda +d}\{\tr(X)I+ \lambda(X+X^{\T})\}
  \end{align}
  is an EB-map (\cite{DMS23}). Note that
 \begin{align*}
       C_{\Phi_{\lambda,d}} =(I_d\otimes I_d)+\lambda(P+F),
  \end{align*}
  where $P=\ranko{\sum_j e_j\otimes e_i}{\sum_j e_j\otimes e_j}\in\B{\mbb{C}^d\otimes\mbb{C}^d}$. Since  $C_{\Phi_{\lambda,d}}$ is not a projection, $\Phi_{\lambda,d}$ is not degradable.   
\end{eg}
 
 In the rest of this section we discuss degradable CP-maps.  Suppose $\Phi\in\mathrm{CP}(d,d)$ is of the form $\Phi=\mathrm{Ad}_{Y}\circ S_A\circ\mathrm{Ad}_Z$, where $A\in\M{d}^+, Y,Z\in\M{d}$. If both $Y,Z$ are invertible (in particular unitaries), then $\Phi$ is degradable. See \cite{WoPe07,DeSh05} for details.
 
\begin{eg}
 Let $W\in\M{d_3\times d_2}$ is a co-isometry, $A\in\M{d_3}^+$ and $Z\in\M{d_1\times d_3}$. Consider $\Phi=\mathrm{Ad}_W\circ S_A\circ \mathrm{Ad}_Z\in\mathrm{CP}(d_1,d_2)$.  Now since $S_A^c$ is EB we have $\Phi^c=(\mathrm{Ad}_{W}\circ S_A\circ\mathrm{Ad}_{V})^c=S_A^c\circ \mathrm{Ad}_{V}$ is also EB, hence anti-degradable. Thus $\Phi$ is degradable.
\end{eg} 

 \begin{prop}
 Let $\Phi_j:\M{d}\to\M{d_j},1\leq j\leq k$ be pure CP-maps. Then the map $\Phi:=\oplus_{i=1}^{k}\Phi_i$ is degradable.  
 \end{prop}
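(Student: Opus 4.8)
The plan is to exhibit an explicit complementary map to $\Phi$ and then show it is recoverable from $\Phi$ by a simple ``block trace'' channel. First I would record what purity buys us: since each $\Phi_i$ is pure (Choi rank one), write $\Phi_i(\cdot)=W_i(\cdot)W_i^*$ for a single $W_i\in\B{\mbb{C}^d,\mbb{C}^{d_i}}$. Viewing $\bigoplus_{i=1}^k\mbb{C}^{d_i}=\mbb{C}^D$ with $D=\sum_i d_i$, let $P_i\in\B{\mbb{C}^D,\mbb{C}^{d_i}}$ be the block projections, so that $P_jP_i^*=\delta_{ij}I_{d_i}$ and $\sum_iP_i^*P_i=I_D$. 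Then $A_i:=P_i^*W_i$ gives a Kraus decomposition $\Phi(T)=\sum_{i=1}^k A_iTA_i^*$, and the ranges of the $A_i$ lie in mutually orthogonal summands of $\mbb{C}^D$.

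Next I would compute the complementary map via \eqref{eq-compl-defn-2}. The key point is that the off-diagonal entries vanish: since $A_j^*A_i=W_j^*(P_jP_i^*)W_i=\delta_{ij}W_i^*W_i$, one obtains $\tr(A_iTA_j^*)=\delta_{ij}\tr(\Phi_i(T))$. Hence, with $\{e_i\}_{i=1}^k$ an orthonormal basis of $\mbb{C}^k$,
\[
  \Phi^c(T)=\sum_{i=1}^k\tr(\Phi_i(T))\ranko{e_i}{e_i},\qquad\forall~T\in\TC{\mbb{C}^d},
\]
a diagonal (classical) map. This is exactly where purity enters: each summand carries a one-dimensional environment, so the only information passed to the complement is the scalar weight $\tr(\Phi_i(T))$ of the $i$-th block.

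Finally I would produce the degrading map. Define $\Gamma:\TC{\mbb{C}^D}\to\TC{\mbb{C}^k}$ by $\Gamma(S):=\sum_{i=1}^k\tr(P_iSP_i^*)\ranko{e_i}{e_i}$, that is, trace out each diagonal block and record its weight; this is a trace-preserving quantum operation, with Kraus operators $\ranko{e_i}{f}$ as $f$ runs over an orthonormal basis of the $i$-th block. Since $P_i\Phi(T)P_i^*=\Phi_i(T)$, a one-line check yields $\Gamma\circ\Phi=\Phi^c$, so $\Gamma\circ\Phi\in\msc{CM}(\Phi)$ and $\Phi$ is degradable.

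The only genuine obstacle here is conceptual rather than computational: one must notice that for a direct sum the complement is again a direct sum, $\Phi^c=\bigoplus_i\Phi_i^c$, and that purity collapses each $\Phi_i^c$ to a scalar, turning the whole complement into a classical map recoverable by block traces. Without purity the complement $\bigoplus_i\Phi_i^c$ generally retains intra-block coherences and need not be degradable, so the hypothesis cannot be dropped.
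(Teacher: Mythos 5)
Your proof is correct and follows essentially the same route as the paper's: decompose $\Phi$ into Kraus operators with mutually orthogonal ranges coming from the direct-sum blocks, observe that the off-diagonal terms of the complementary map vanish so that $\Phi^c$ is the diagonal map $T\mapsto\sum_{i}\tr(\Phi_i(T))\ranko{e_i}{e_i}$, and recover it from $\Phi$ by tracing out each block. The paper writes this out only for $k=2$ and in the convention $\mathrm{Ad}_V(X)=V^*XV$, but the argument is the same.
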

 
 \begin{proof}
  We prove it only for the case $k=2$, and the general case can be proved similarly. So, assume $\Phi_j=\mathrm{Ad}_{V_j}$ for $j=1,2$.  Letting $W_1=\Matrix{V_1&0}\in\M{d\times{d_1+d_2}}$ and $W_2=\Matrix{0&V_2}\in\M{d\times{d_1+d_2}}$ we have $\Phi=\sum_{j=1}^2\mathrm{Ad}_{W_j}$. Note that $W_jW_i^*=0$ for $i\neq j\in\{1,2\}$ and hence 
   \begin{align*}
       \Phi^c(X) =\sum_{i,j=1}^2\tr(W_i^*XW_j)\ranko{e_i^{(2)}}{e_j^{(2)}}
                       =\sum_{j=1}^2\tr(W_j^*XW_j)\ranko{e_j^{(2)}}{e_j^{(2)}}
                       =\sum_{j=1}^2\tr(V_j^*XV_j)\ranko{e_j^{(2)}}{e_j^{(2)}}
   \end{align*}  
   for all $X\in\M{d}$. Note that  $\Gamma:\M{d_1+d_2}\to\mbb{C}$ given by 
   $$\Gamma(\Matrix{A_{11}&A_{12}\\A_{21}&A_{22}}):=\Matrix{\tr(A_{11})&0\\0&\tr(A_{22})}$$
   is a CP-map such that  $\Phi^c=\Gamma\circ\Phi$ so that $\Phi$ is degradable. 
 \end{proof}

\begin{cor}
 Let $\Phi\in\mathrm{UCP}(d_1,d_2)$.  If $\Phi$ is  a $C^*$-extreme point of $\mathrm{UCP}(d_1,d_2)$, then it is degradable.
\end{cor}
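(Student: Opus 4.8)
The plan is to reduce the statement to the Proposition just proved, via the known normal form for $C^*$-extreme points of $\mathrm{UCP}(d_1,d_2)$. By the Farenick--Morenz structure theorem for $C^*$-extreme points in the generalized state space $\mathrm{UCP}(\M{d_1},\M{d_2})$, such a point is, up to unitary equivalence on the output, a direct sum of pure unital CP-maps. Concretely, I would first produce a unitary $U\in\M{d_2}$, positive integers $m_1,\dots,m_k$ with $\sum_{i=1}^k m_i=d_2$, and pure CP-maps $\Phi_i:\M{d_1}\to\M{m_i}$ such that
\[
 \Phi=\mathrm{Ad}_U\circ\Psi,\qquad \Psi:=\bigoplus_{i=1}^k\Phi_i.
\]

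Next I would invoke the preceding Proposition: since $\Psi$ is a direct sum of pure CP-maps, it is degradable, so there is a quantum operation $\Gamma$ with $\Gamma\circ\Psi=\Psi^c$, where $\Psi^c$ is the complementary map attached to the natural Stinespring representation of $\Psi$.

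Finally I would transport degradability across the unitary conjugation. Because $U$ is unitary, and in particular a co-isometry, the identity $(\mathrm{Ad}_U\circ\Psi)^c=\Psi^c$ recorded in Section 2 gives $\Phi^c=\Psi^c$. Writing $\Psi=\mathrm{Ad}_{U^*}\circ\Phi$, this yields $\Phi^c=\Gamma\circ\Psi=\Gamma\circ\mathrm{Ad}_{U^*}\circ\Phi$. As $\Gamma\circ\mathrm{Ad}_{U^*}$ is again a quantum operation and $\Phi^c\in\msc{CM}(\Phi)$, the map $\Phi$ is degradable by definition.

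The only substantial ingredient is the first step, namely the structural fact that $C^*$-extreme points of $\mathrm{UCP}(d_1,d_2)$ are unitarily equivalent to direct sums of pure maps; once this is in hand, everything else is bookkeeping, and the converse failure is exactly what Example \ref{eg-counter-CP-extr-degr} addresses. Two minor points deserve care: the Proposition requires only that the summands $\Phi_i$ be pure (unitality is not needed, and is automatic here), and one must conjugate in the correct direction, which is precisely why the co-isometry complement identity of Section 2 is used rather than an ad hoc recomputation of $\Phi^c$.
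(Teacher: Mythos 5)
Your proposal is correct and follows essentially the same route as the paper: invoke the Farenick--Zhou structure theorem to write the $C^*$-extreme point, up to a unitary conjugation on the output, as a direct sum of pure CP-maps, apply the preceding Proposition to degrade that direct sum, and transport the degrading map back across the unitary using the co-isometry identity for complements from Section 2. The only cosmetic differences are the attribution (the paper cites \cite[Theorem 2.1]{FaZh98} rather than the Farenick--Morenz form) and the direction in which the unitary is placed, which are equivalent since $U$ is invertible.
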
 

 \begin{proof}
 Let $\Phi\in \mathrm{UCP}(d_1,d_2)$ be a $C^*$-extreme point. From \cite[Theorem 2.1]{FaZh98}, 
 there exist a unitary $U\in\M{d_2}$ and positive integers $n_1\geq n_2 \geq \cdots\geq n_k$ with $\sum_{i=1}^k n_i=d_2$ and  isometries $V_i\in\M{d_1\times n_i}$  such that   $\mathrm{Ad_U}\circ\Phi=\oplus_{i=1}^k \mathrm{Ad}_{V_i}$. From the above proposition it follows that $\Phi^c=(\mathrm{Ad_U}\circ\Phi)^c=\Gamma\circ\mathrm{Ad}_U\circ\Phi$ for some CP-map $\Gamma$ concluding that $\Phi$ is degradable. 
\end{proof} 

\begin{eg}\label{eg-counter-CP-extr-degr}
 Let $\Phi:\M{d}\to\M{d}$ be a unital EB map given by $\Phi=\sum_{i=1}^d\mathrm{Ad}_{A_i}$ where $A_i=\ranko{e_i}{e_i}$.  Since  $A_i^*A_j=0$ for all $i\neq j$, we have $\Phi=\Phi^c$. Thus From theorem \ref{thm-DEB-char}, $\Phi$ is degradable. Note that $\Phi$ is not a linear extreme (hence not $C^*$-extreme) point of $\mathrm{UCP}(d)$.   
\end{eg}

\section{Acknowledgement}
RD supported by the IoE-CoE project (No. SB20210797MAMHRD008573) from  MHRD (Ministry of Human Resource Development, India). GS thanks to IIT Madras for financial support through the Institute Postdoctoral Fellowship. KS is partially supported by the IoE-CoE Project (No. SB22231267MAETWO008573) from MHRD (India) and partially by the MATRIX grant (File no. MTR/2020/000584) from SERB (India).

\appendix
\section{}

The following theorem is a generalized version of \cite[Theorem 2]{Hol07} to infinite dimensional Hilbert spaces and we use the similar technique to prove this. 

\begin{thm}\label{thm-Stine-QC-uniq}
 Let $\Phi:\TC{\mcl{H}_1}\to\TC{\mcl{H}_2}$ be a quantum operation with the minimal Stinespring representation  $(\widehat{\mcl{K}},\widehat{A})$.  If $(\mcl{K},A)$ is a Stinespring representation of $\Phi$, then  there exists an isometry $V:\widehat{\mcl{K}}\to\mcl{K}$ such that 
       \begin{align*}
                 A=(I_{\mcl{H}_2}\otimes V)\widehat{A}
                  \quad\mbox{and}\quad
                 \widehat{A}=(I_{\mcl{H}_2}\otimes V^*)A. 
       \end{align*} 
 (If $(\mcl{K},A)$ is also minimal then $V$ can be chosen to be a unitary).  Consequently, given other Stinespring representation $(\mcl{K}',A')$ of $\Phi$  there exist a partial isometry $W:\mcl{K}\to\mcl{K}'$ such that 
          \begin{align*}
                  A'=(I_{\mcl{H}_2}\otimes W)A   
                  \qquad\mbox{and}\qquad
                  A=(I_{\mcl{H}_2}\otimes W^*)A' 
          \end{align*} 
         
\end{thm}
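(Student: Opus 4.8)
The plan is to reduce everything to the uniqueness of the minimal Stinespring dilation of the normal CP-map $\Phi^*:\B{\mcl{H}_2}\to\B{\mcl{H}_1}$. Recall from the preliminaries that a pair $(\mcl{K},A)$ is a Stinespring representation of $\Phi$ exactly when it is one of $\Phi^*$, i.e.\ $\Phi^*(X)=A^*(X\otimes I_{\mcl{K}})A$ for all $X\in\B{\mcl{H}_2}$, and that minimality of $(\widehat{\mcl{K}},\widehat{A})$ means $\cspan\{(X\otimes I_{\widehat{\mcl{K}}})\widehat{A}h: h\in\mcl{H}_1,\ X\in\B{\mcl{H}_2}\}=\mcl{H}_2\otimes\widehat{\mcl{K}}$. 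First I would construct a candidate intertwiner on this dense subspace by setting $U\big(\sum_i(X_i\otimes I_{\widehat{\mcl{K}}})\widehat{A}h_i\big):=\sum_i(X_i\otimes I_{\mcl{K}})Ah_i$.

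The key computation is that for any finite families $\{h_i\}\subseteq\mcl{H}_1$ and $\{X_i\}\subseteq\B{\mcl{H}_2}$ one has $\big\langle\sum_i(X_i\otimes I_{\mcl{K}})Ah_i,\sum_j(X_j\otimes I_{\mcl{K}})Ah_j\big\rangle=\sum_{i,j}\ip{h_i,\Phi^*(X_i^*X_j)h_j}$, using $A^*(Y\otimes I_{\mcl{K}})A=\Phi^*(Y)$; the identical expression results with $(\widehat{\mcl{K}},\widehat{A})$ in place of $(\mcl{K},A)$. Since the right-hand side does not see which representation is used, $U$ is simultaneously well defined (a null vector on the $\widehat{A}$-side maps to a null vector) and isometric on the dense subspace, so it extends to an isometry $U:\mcl{H}_2\otimes\widehat{\mcl{K}}\to\mcl{H}_2\otimes\mcl{K}$. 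Taking all $X_i=I$ in the definition gives $U\widehat{A}=A$.

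Next I would show $U$ is a morphism of $\B{\mcl{H}_2}$-modules, i.e.\ $U(Y\otimes I_{\widehat{\mcl{K}}})=(Y\otimes I_{\mcl{K}})U$ for all $Y\in\B{\mcl{H}_2}$; this holds on the dense subspace because $(Y\otimes I)(X\otimes I)=(YX\otimes I)$, and hence everywhere by continuity. The step I expect to be the crux is the passage from this intertwining relation to the conclusion that $U=I_{\mcl{H}_2}\otimes V$ for some operator $V:\widehat{\mcl{K}}\to\mcl{K}$. For this I would run the standard argument: applying the relation with $Y=\ranko{\xi}{\xi}$ a rank-one projection shows $U(\xi\otimes k)\in\xi\otimes\mcl{K}$, and varying $Y=\ranko{\xi}{\xi'}$ shows the resulting assignment $k\mapsto Vk$ is independent of the chosen unit vector, yielding $U=I_{\mcl{H}_2}\otimes V$. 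As $U$ is an isometry, so is $V$, and $U\widehat{A}=A$ becomes $A=(I_{\mcl{H}_2}\otimes V)\widehat{A}$; applying $I_{\mcl{H}_2}\otimes V^*$ and using $V^*V=I_{\widehat{\mcl{K}}}$ gives $\widehat{A}=(I_{\mcl{H}_2}\otimes V^*)A$. If $(\mcl{K},A)$ is also minimal, then the range of $U$ contains the dense minimal subspace of $\mcl{H}_2\otimes\mcl{K}$, so $U$ is a surjective isometry and $V$ is unitary.

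Finally, for the consequence I would apply the first part to two arbitrary Stinespring representations $(\mcl{K},A)$ and $(\mcl{K}',A')$ separately against the minimal $(\widehat{\mcl{K}},\widehat{A})$, obtaining isometries $V:\widehat{\mcl{K}}\to\mcl{K}$ and $V':\widehat{\mcl{K}}\to\mcl{K}'$ with $A=(I_{\mcl{H}_2}\otimes V)\widehat{A}$ and $A'=(I_{\mcl{H}_2}\otimes V')\widehat{A}$. Setting $W:=V'V^*:\mcl{K}\to\mcl{K}'$, one checks $W^*W=VV^*$ and $WW^*=V'(V')^*$ (using $V^*V=(V')^*V'=I_{\widehat{\mcl{K}}}$), so $W$ is a partial isometry, and $V^*V=I_{\widehat{\mcl{K}}}$ gives $(I_{\mcl{H}_2}\otimes W)A=(I_{\mcl{H}_2}\otimes V')\widehat{A}=A'$ and likewise $(I_{\mcl{H}_2}\otimes W^*)A'=A$, as claimed.
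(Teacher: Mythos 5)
Your proposal is correct and follows essentially the same route as the paper: pass to $\Phi^*$, define the intertwiner $S$ on the dense minimal subspace by $S(X\otimes I_{\widehat{\mcl{K}}})\widehat{A}h=(X\otimes I_{\mcl{K}})Ah$, deduce $S=I_{\mcl{H}_2}\otimes V$ from the commutation with $\B{\mcl{H}_2}\otimes I$, and set $W=V'V^*$ for the final claim. The only difference is that you spell out the well-definedness/isometry computation and the rank-one argument for $S=I_{\mcl{H}_2}\otimes V$, which the paper asserts without detail.
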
 

 \begin{proof}
 Let $(\widehat{\mcl{K}}, \widehat{A})$  be the  minimal Stinespring representation of $\Phi^*$. Then,
 \begin{align*}
     \cspan\{(X\otimes I_{\widehat{\mcl{K}}})\widehat{A}h: h\in\mcl{H}_1, X\in\B{\mcl{H}_2}\}=\mcl{H}_2\otimes\widehat{\mcl{K}}
 \end{align*}
 and $\Phi^*(X)=\widehat{A}^*(X\otimes I_{\widehat{\mcl{K}}})\widehat{A}$ for all $X\in\B{\mcl{H}_2}.$ Now since $(\mcl{K}, A)$ is a Stinespring representation for $\Phi$ we have $\Phi^*(X)=A^*(X\otimes I_{\mcl{K}})A$ for all $X\in\B{\mcl{H}_2}$. Define $S:\mcl{H}_2\otimes \widehat{\mcl{K}}\to \mcl{H}_2\otimes \mcl{K}$ by 
 \begin{align*}
    S(X\otimes I_{\widehat{\mcl{K}}})\widehat{A}h:=(X\otimes I_{\mcl{K}})Ah,\qquad\forall~X\in\B{\mcl{H}_2},h\in\mcl{H}_1.
 \end{align*}
 Then $S$ is an isometry  satisfying $S\widehat{A}=A$ and $S(X\otimes I_{\widehat{\mcl{K}}})=(X\otimes I_{\mcl{K}})S$ for all $X\in\B{\mcl{H}_2}$. Hence there exists an isometry  $V\in\B{\widehat{\mcl{K}},\mcl{K}}$ such that $S=I_{\mcl{H}_2}\otimes V$. Now $S\widehat{A}=A$ implies that $A=(I_{\mcl{H}_2}\otimes V)\hat{A}$ and hence $\hat{A}=(I_{\mcl{H}_2}\otimes V^*)A$.  Similarly, one can have an isometry $V':\hat{\mcl{K}}\to\mcl{K}'$  such that $A'=(I_{\mcl{H}_2}\otimes V')\hat{A}$ and $\hat{A}=(I_{\mcl{H}_2}\otimes V'^*)A'$. Note that $W:=V'V^*\in\B{\mcl{K},\mcl{K}'}$ is the required partial isometry.
\end{proof}

\begin{prop}\label{Comp_channel_formula}
Let $\Phi:\TC{\mcl{H}_1}\to\TC{\mcl{H}_2}$ be a quantum operation with Kraus decomposition $\Phi(T)=\sum_{j\in\Lambda}A_jTA_j^*$, where $\{A_j\}_{j\in\Lambda}\subseteq\B{\mcl{H}_1,\mcl{H}_2}$ is countable family with $\ssum_{j\in\Lambda}A_j^*A_j\in\B{\mcl{H}_2}$. If $\mcl{H}_3$ is any other separable complex Hilbert space with orthonormal basis $\{e_j:j\in \Lambda\}$, then the map $\Psi:\TC{\mcl{H}_1}\to\TC{\mcl{H}_3}$ given by \begin{align}\label{comp of chan}
     \Psi(T):=\sum_{i,j\in \Lambda}\tr(A_iTA_j^*)\ranko{e_i}{e_j}
 \end{align}
 is a well-defined quantum operation and complementary to $\Phi$.
\end{prop}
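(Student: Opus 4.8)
The plan is to realize $\Psi$ as the complementary map attached to a concrete Stinespring representation of $\Phi$ built directly from the given Kraus family, so that ``well-defined quantum operation'' and ``complementary to $\Phi$'' come for free from the discussion following \eqref{eq-compl-defn-1}, leaving only the matrix identity \eqref{comp of chan} to be checked. First I would take $\mcl{K}=\mcl{H}_3$ together with its given orthonormal basis $\{e_j\}_{j\in\Lambda}$ and define $A:\mcl{H}_1\to\mcl{H}_2\otimes\mcl{K}$ by $Ax:=\sum_{j\in\Lambda}A_jx\otimes e_j$. Boundedness of $A$ is immediate from the assumption that $\ssum_{j\in\Lambda}A_j^*A_j$ converges to a bounded operator, since for $x\in\mcl{H}_1$ one has $\norm{Ax}^2=\sum_{j\in\Lambda}\norm{A_jx}^2=\bip{x,(\ssum_{j\in\Lambda}A_j^*A_j)x}$. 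As recalled in the preliminaries, $(\mcl{K},A)$ is then a Stinespring representation of $\Phi$, that is $\Phi(T)=\tr_{\mcl{K}}(ATA^*)$; consequently, by the discussion following \eqref{eq-compl-defn-1}, the map $T\mapsto\tr_{\mcl{H}_2}(ATA^*)$ is automatically a well-defined quantum operation complementary to $\Phi$.

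The remaining task is to verify that this complementary map is exactly $\Psi$, i.e.\ that $\tr_{\mcl{H}_2}(ATA^*)$ has the matrix entries claimed in \eqref{comp of chan}. I would first record the identity $A^*(f\otimes e_j)=A_j^*f$ for every $f\in\mcl{H}_2$ and $j\in\Lambda$, which follows directly from the defining formula for $A$. Fixing an orthonormal basis $\{f_\alpha\}$ of $\mcl{H}_2$ and using the standard expression of the partial trace over $\mcl{H}_2$, I would then compute, for $i,j\in\Lambda$,
\begin{align*}
 \bip{e_i,\tr_{\mcl{H}_2}(ATA^*)e_j}
  =\sum_\alpha\bip{f_\alpha\otimes e_i,\,ATA^*(f_\alpha\otimes e_j)}
  =\sum_\alpha\bip{A_i^*f_\alpha,\,TA_j^*f_\alpha}
  =\tr(A_iTA_j^*).
\end{align*}
Hence $\tr_{\mcl{H}_2}(ATA^*)$ has matrix representation $[\tr(A_iTA_j^*)]_{i,j\in\Lambda}$ with respect to $\{e_j\}_{j\in\Lambda}$, which is precisely the operator on the right-hand side of \eqref{comp of chan}. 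This identifies $\Psi(T)=\tr_{\mcl{H}_2}(ATA^*)$, and the proposition follows.

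I expect the only genuinely delicate point to be the handling of the (generally infinite) series in the partial-trace step: one must know that $ATA^*\in\TC{\mcl{H}_2\otimes\mcl{K}}$ for $T\in\TC{\mcl{H}_1}$, so that its partial trace over $\mcl{H}_2$ is a bona fide trace-class operator on $\mcl{K}$, and that the double sum $\sum_{i,j}\tr(A_iTA_j^*)\ranko{e_i}{e_j}$ converges in trace norm to that partial trace rather than merely entrywise. Both concerns are absorbed into the identification with $\tr_{\mcl{H}_2}(ATA^*)$: since $A$ is bounded and $T$ is trace class, $ATA^*$ and its partial trace are trace class, and \eqref{comp of chan} is simply the matrix expansion of that operator. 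The interchange of $\sum_\alpha$ with the inner products above needs no separate Fubini-type justification, as it is exactly the defining formula for the partial trace. Thus the substantive content of the proposition reduces to the one-line matrix-entry computation, once $\Psi$ has been correctly positioned as a complementary map.
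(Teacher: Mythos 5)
Your proposal is correct and follows essentially the same route as the paper: both build the Stinespring operator $Ax=\sum_{j}A_jx\otimes e_j$ from the Kraus family, establish boundedness via $\norm{Ax}^2=\bip{x,(\ssum_jA_j^*A_j)x}$, and identify $\Psi(T)$ with $\tr_{\mcl{H}_2}(ATA^*)$ by computing matrix entries against a product orthonormal basis. The only cosmetic difference is that the paper re-verifies $\tr_{\mcl{H}_3}(ATA^*)=\Phi(T)$ explicitly by the same entrywise computation, whereas you cite the preliminaries for that half; the substance is identical.
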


\begin{proof}
 Since $\ssum_jA_j^*A_j\in\B{\mcl{H}_2}^+$, we have
 \begin{align*}
     \bnorm{\sum_{j\in\Lambda} A_jx\otimes e_j}^2
     =\ip{x,\sum_{j\in\Lambda}A_j^*A_jx}
     \leq \bnorm{\ssum_{j\in\Lambda}A_j^*A_j}\norm{x}^2,\qquad\forall~x\in\mcl{H}_1.
 \end{align*} 
 Hence $x\mapsto\sum_{j\in\Lambda}A_jx\otimes e_j$ defines an operator $A\in\B{\mcl{H}_1,\mcl{H}_2\otimes\mcl{H}_3}$. Now fix an orthonormal basis $\{f_j\}_{j\in\Lambda'}$ for $\mcl{H}_2$. Then for all $T\in\TC{\mcl{H}_1}$, we have
 \begin{align*}
    \tr_{\mcl{H}_3}(ATA^*)
       &=\sum_{i,j\in\Lambda'}\sum_{k\in\Lambda}\ip{f_i\otimes e_k,ATA^*(f_j\otimes e_k)}\ranko{f_i}{f_j}\\
       &=\sum_{i,j\in\Lambda'}\ip{f_i,\sum_kA_kTA_k^*f_j}\ranko{f_i}{f_j}\\
       &=\sum_kA_kTA_k^*\\
       &=\Phi(T)
 \end{align*}
 and 
 \begin{align*}
    \tr_{\mcl{H}_2}(ATA^*)
       &=\sum_{k\in\Lambda'}\sum_{i,j\in\Lambda}\ip{f_k\otimes e_i,ATA^*(f_k\otimes e_j)}\ranko{e_i}{e_j}\\
       &=\sum_{i,j\in\Lambda}\sum_{k\in\Lambda'}\ip{f_k,A_iTA_j^*f_k}\ranko{e_i}{e_j}\\
       &=\sum_{i,j\in\Lambda}\tr(A_iTA_j^*)\ranko{e_i}{e_j}.
 \end{align*}
 Thus $\Psi(T):=\tr_{\mcl{H}_{2}}(ATA^*)= \sum_{i,j\in\Lambda}\tr(A_iTA_j^*)\ranko{e_i}{e_j} ,\forall~T\in\TC{\mcl{H}_1}$ is a well-defined quantum operation which is complementary to $\Phi$. 
\end{proof}

 We used similar techniques of \cite{He13,LiDu15} to prove the following theorem.

\begin{thm}\label{thm-QO-char}
 Let $\mcl{H}_1,\mcl{H}_2$ be two separable Hilbert spaces with $\dim(\mcl{H}_1)=\infty$. Then for a quantum operation $\Phi:\TC{\mcl{H}_1} \to \TC{\mcl{H}_2}$ the following are equivalent: 
 \begin{enumerate}[label=(\roman*)]
    \item $\Phi$ is SEB.
     \item For every sequence of positive scalars $\{\lambda_i:i\in\mbb{N}\}\subseteq\mbb{R}$ with $\sum_{i\in\mbb{N}}\lambda_i<\infty$,  the operator $\sum_{i,j\in\mbb{N}}\sqrt{\lambda_i\lambda_j}E_{ij}\otimes \Phi(E_{ij})\in\TC{\mcl{H}_1\otimes\mcl{H}_2}^+$ is countably decomposable separable.
    \item For one sequence of positive scalars $\{\lambda_i:i\in\mbb{N}\}\subseteq\mbb{R}$ with $\sum_{i\in\mbb{N}}\lambda_i<\infty$,  the operator $\sum_{i,j\in\mbb{N}}\sqrt{\lambda_i\lambda_j}E_{ij}\otimes \Phi(E_{ij})\in\TC{\mcl{H}_1\otimes\mcl{H}_2}^+$ is countably decomposable separable.
    \item (Kraus decomposition:) There exist Kraus operators $\{A_j\}_{j\in \mbb{N}}\subseteq\B{\mcl{H}_1,\mcl{H}_2}$ of rank at most one with $\ssum_{j\in\mbb{N}}A_j^*A_j\in\B{\mcl{H}_1}$ such that 
    \begin{align*}
            \Phi(T)=\sum_{j\in\mbb{N}}A_jTA_j^*,\qquad\forall~T\in\TC{\mcl{H}_1}.
    \end{align*}
     \item (Holevo form:) There exist positive operators $\{F_j\}_{j\in\mbb{N}}\subseteq\B{\mcl{H}_1}$ with $\ssum_{k\in\mbb{N}}F_k\in\B{\mcl{H}_1}$ and  states $\{R_j\}_{j\in\mbb{N}}\subseteq\ST{\mcl{H}_2}$ such that 
    \begin{align*}
        \Phi(T)= \sum_{j\in\mbb{N}}\tr(TF_j)R_j,\qquad\forall~T\in\TC{\mcl{H}_1}.
    \end{align*}
   
    \end{enumerate}
\end{thm}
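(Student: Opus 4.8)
The plan is to establish the cyclic chain $(i)\Rightarrow(ii)\Rightarrow(iii)\Rightarrow(iv)\Rightarrow(i)$ together with the routine equivalence $(iv)\Leftrightarrow(v)$. The device underlying every implication is the Choi-type operator attached to a faithful reference vector: for a summable family $\{\lambda_i\}$ with all $\lambda_i>0$ set $\Omega_\lambda:=\sum_i\sqrt{\lambda_i}\,e_i\otimes e_i\in\mcl{H}_1\otimes\mcl{H}_1$, a genuine vector since $\norm{\Omega_\lambda}^2=\sum_i\lambda_i<\infty$, so that the operator in $(ii)$--$(iii)$ is exactly $C_\lambda=(\id_{\mcl{H}_1}\otimes\Phi)(\ranko{\Omega_\lambda}{\Omega_\lambda})$. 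Writing $D_\lambda:=\sum_i\lambda_i\ranko{e_i}{e_i}$ one has the factorisation $C_\lambda=(D_\lambda^{1/2}\otimes I)\,B\,(D_\lambda^{1/2}\otimes I)$ with $B:=(\id\otimes\Phi)(\ranko{\Omega}{\Omega})\in\B{\mcl{H}_1\otimes\mcl{H}_2}^+$ the (bounded) unnormalised Choi operator; this identity is what will control the ranges in the hard step.

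The implication $(i)\Rightarrow(ii)$ is immediate: taking $\mcl{K}=\mcl{H}_1$ and the state $\rho=\ranko{\Omega_\lambda}{\Omega_\lambda}/\sum_i\lambda_i$, the defining property of an SEB-map makes $(\id\otimes\Phi)(\rho)$ countably decomposable separable, and $C_\lambda$ is a positive scalar multiple of it. The step $(ii)\Rightarrow(iii)$ is trivial (specialise to $\lambda_i=2^{-i}$). For $(iv)\Leftrightarrow(v)$ one rewrites a rank-one Kraus operator $A_j=\ranko{\psi_j}{\phi_j}$ as $A_jTA_j^*=\ip{\phi_j,T\phi_j}\ranko{\psi_j}{\psi_j}$, so that $F_j=\norm{\psi_j}^2\ranko{\phi_j}{\phi_j}$ and $R_j=\ranko{\psi_j}{\psi_j}/\norm{\psi_j}^2$, and conversely; the boundedness conditions $\ssum_jA_j^*A_j\in\B{\mcl{H}_1}$ and $\ssum_jF_j\in\B{\mcl{H}_1}$ correspond. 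Finally $(v)\Rightarrow(i)$ is a direct computation: with $\Phi(T)=\sum_j\tr(TF_j)R_j$ one gets, for any $\rho\in\ST{\mcl{K}\otimes\mcl{H}_1}$, that $(\id_\mcl{K}\otimes\Phi)(\rho)=\sum_j(\id_\mcl{K}\otimes\omega_j)(\rho)\otimes R_j$ with $\omega_j(\cdot)=\tr(\cdot\,F_j)$; each summand is a product positive operator, and the series converges in trace norm since $\sum_j\tr\big((\id\otimes\omega_j)(\rho)\big)=\tr\big((I\otimes\Phi^*(I))\rho\big)<\infty$, exhibiting $(\id\otimes\Phi)(\rho)$ as countably decomposable separable.

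The substance is $(iii)\Rightarrow(iv)$, where the genuinely infinite-dimensional difficulty appears. Assuming $C_\lambda$ is countably decomposable separable for one faithful $\lambda$, the rank-one refinement of countable decomposability lets us write $C_\lambda=\sum_k\ranko{\zeta_k}{\zeta_k}$ (trace-norm convergent) with each $\zeta_k=x_k\otimes y_k$ a product vector. Inverting the Choi correspondence suggests the rank-one Kraus operators $A_k:=\ranko{y_k}{u_k}$ with $u_k:=\sum_i\lambda_i^{-1/2}\ip{x_k,e_i}e_i$, because a direct computation then matches, summand by summand, the reconstruction identity $\sqrt{\lambda_i\lambda_j}\,\Phi(\ranko{e_i}{e_j})=E_iC_\lambda E_j^*$, where $E_i\in\B{\mcl{H}_1\otimes\mcl{H}_2,\mcl{H}_2}$ is the partial evaluation $E_i(x\otimes y)=\ip{e_i,x}y$. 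The obstacle is that $u_k$ need not lie in $\mcl{H}_1$: the weights $\lambda_i^{-1/2}$ blow up as $i\to\infty$, so $u_k$ is a priori only a formal series. This is exactly where the finite-dimensional Choi argument breaks, and where the technique of \cite{He13,LiDu15} enters; I expect it to be the main difficulty of the whole proof.

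The resolution is a range argument. Since $\ranko{\zeta_k}{\zeta_k}\leq C_\lambda\leq\norm{B}\,(D_\lambda\otimes I)$, Douglas' factorisation theorem gives $\zeta_k\in\ran{(D_\lambda\otimes I)^{1/2}}=\ran{D_\lambda^{1/2}\otimes I}$; as $\zeta_k=x_k\otimes y_k$ is a simple tensor with $y_k\neq0$, this forces $x_k\in\ran{D_\lambda^{1/2}}$, i.e. $\sum_i\lambda_i^{-1}\abs{\ip{e_i,x_k}}^2<\infty$. Hence $u_k$ is a bona fide vector of $\mcl{H}_1$ with $\norm{u_k}=\norm{D_\lambda^{-1/2}x_k}$, and the $A_k$ are genuine rank-one operators. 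It then remains to check the Kraus bound and that $\sum_kA_k(\cdot)A_k^*$ reproduces $\Phi$. For the bound, the partial sums of $\ssum_k\norm{y_k}^2\ranko{u_k}{u_k}$ are increasing and, tested against any finite combination of the $e_i$, dominated by the bounded operator $\Phi^*(I)$; monotone convergence of quadratic forms yields strong-operator convergence to $\Phi^*(I)\in\B{\mcl{H}_1}$. Consequently $\Phi'(T):=\sum_kA_kTA_k^*$ is a quantum operation agreeing with $\Phi$ on every $\ranko{e_i}{e_j}$, and since their span is trace-norm dense in $\TC{\mcl{H}_1}$ with both maps trace-norm continuous, $\Phi'=\Phi$, giving the rank-one Kraus decomposition $(iv)$. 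Thus the only real hurdle is the unboundedness of $D_\lambda^{-1/2}$, overcome by the Douglas range inclusion forced by the separable decomposition; once the Kraus operators are seen to be well defined, the remaining verifications are routine density-and-continuity arguments.
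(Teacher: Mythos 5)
Your architecture matches the paper's almost step for step (the chain $(i)\Rightarrow(ii)\Rightarrow(iii)\Rightarrow(iv)$, the routine passage between $(iv)$ and $(v)$, and the direct computation back to $(i)$), and your identification of the real difficulty --- making $u_k=D_\lambda^{-1/2}x_k$ a genuine vector of $\mcl{H}_1$ --- is exactly right. But the inequality you use to resolve it, $C_\lambda\leq\norm{B}(D_\lambda\otimes I)$ with $B=\sum_{i,j}E_{ij}\otimes\Phi(E_{ij})$ the ``unnormalised Choi operator'', is not available. In infinite dimensions $\Omega=\sum_ie_i\otimes e_i$ is not a vector, and $B$ is in general an unbounded quadratic form: for $\Phi=\id$ one gets $\bip{\xi_n,B\xi_n}=n$ on the unit vectors $\xi_n=n^{-1/2}\sum_{i=1}^ne_i\otimes e_i$. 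Boundedness of $B$ under hypothesis $(iii)$ is essentially equivalent to what you are trying to prove (for an SEB map in Holevo form one has $B\leq(\ssum_jF_j)^{\T}\otimes I$), so invoking $\norm{B}$ at this stage is circular. As written, the step establishing $\zeta_k\in\ran{D_\lambda^{1/2}\otimes I}$ has a genuine gap.

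The gap is repairable, and the repair is exactly the partial-trace bound the paper proves (the display $\sum_kr_k\ranko{u_k}{u_k}\leq\norm{\Phi^*(I)}\sum_j\lambda_jE_{jj}$ in its proof of $(iii)\Rightarrow(iv)$): instead of dominating $C_\lambda$ itself, dominate its partial trace. Monotonicity of $\tr_{\mcl{H}_2}$ on positive trace-class operators gives
\begin{align*}
  \norm{y_k}^2\ranko{x_k}{x_k}
  =\tr_{\mcl{H}_2}\big(\ranko{\zeta_k}{\zeta_k}\big)
  \leq\tr_{\mcl{H}_2}(C_\lambda)
  =D_\lambda^{1/2}\,\Phi^*(I)^{\T}\,D_\lambda^{1/2}
  \leq\norm{\Phi^*(I)}\,D_\lambda ,
\end{align*}
and Douglas' theorem applied on $\mcl{H}_1$ alone then yields $x_k\in\ran{D_\lambda^{1/2}}$, which is all you need. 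With this substitution your argument goes through; the subsequent uniform bound $\sum_{k\leq m}\norm{y_k}^2\ranko{u_k}{u_k}\leq\norm{\Phi^*(I)}I$ tested on finite combinations of the $e_i$, and the identification $\Phi'=\Phi$ by agreement on the $E_{ij}$ plus trace-norm continuity, coincide with the paper's. (The paper reaches the same point without Douglas, by truncating to $z_{k,n}=P_nD_\lambda^{-1/2}x_k$, noting $\norm{z_{k,n}}^2\leq\norm{\Phi^*(I)}$ uniformly, and passing to the limit via \cite[Lemma 3.3]{LiDu15}; your Douglas route is a legitimate shortcut once the correct dominating operator is used.) One cosmetic point: the monotone limit $\ssum_k\norm{y_k}^2\ranko{u_k}{u_k}$ is a priori only $\leq\norm{\Phi^*(I)}I$, not equal to $\Phi^*(I)$; equality follows only after $\Phi'=\Phi$ is established.
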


\begin{proof}
    $(i)\Rightarrow (ii)$ Let $\{e_j\}_{j\in\mbb{N}}$ be an orthonormal basis of $\mcl{H}_1$ and let $\lambda_j\in\mbb{R}$ be positive scalars such that $\sum_{j\in\mbb{N}}\lambda_j<\infty$. Let $e:=\sum_{j\in\mbb{N}}\sqrt{\lambda_j}e_j\otimes e_j\in\mcl{H}_1\otimes\mcl{H}_1$ and $\rho:=\ranko{e}{e}\in\TC{\mcl{H}_1\otimes\mcl{H}_1}$. Let $w_n:=\sum_{j=1}^n\sqrt{\lambda_j}e_j\otimes e_j$ and $\rho_n:=\ranko{w_n}{w_n}$.  Clearly, $\lim_n\tr(\rho_n)=\tr(\rho)$ and $\rho_n\to \rho$ in WOT, hence $\rho_n\to \rho$ in trace-norm (\cite[Theorem 2]{Gru73}). Note that $\sum_{i,j}\norm{\sqrt{\lambda_i\lambda_j}E_{ij}\otimes E_{ij}}_1$ is convergent, hence (w.r.t trace norm topology)
 \begin{align*}
     \lim_n\rho_n
     &=\lim_{n\to\infty}\sum_{i,j=1}^n\sqrt{\lambda_i\lambda_j}E_{ij}\otimes E_{ij}\\ &=\lim_{n,m\to\infty}\sum_{i=1}^n\sum_{j=1}^m\sqrt{\lambda_i\lambda_j}E_{ij}\otimes E_{ij}\\
     &=\sum_{i,j}\sqrt{\lambda_i\lambda_j}E_{ij}\otimes E_{ij}
 \end{align*}
 Since $\id_{\mcl{H}_1}\otimes\Phi$ is continuous in trace norm we have
 \begin{align}\label{eq-id-Phi-rho-1}
     (\id_{\mcl{H}_1}\otimes\Phi)(\rho)
     =\sum_{i,j}\sqrt{\lambda_i\lambda_j}E_{ij}\otimes\Phi(E_{ij}).
 \end{align}
 As $\Phi$ is SEB, we have $(\id_{\mcl{H}_1}\otimes\Phi)(\rho)$  is  countably decomposable separable.\\
 $(ii)\Rightarrow (iii)$ Clearly holds.\\
 $(iii)\Rightarrow (iv)$ Let $\{\lambda_i:i\in\mbb{N}\}\subseteq\mbb{R}$ with $\sum_{i\in\mbb{N}}\lambda_i<\infty$ be such that  $\sum_{i,j\in\mbb{N}}\sqrt{\lambda_i\lambda_j}E_{ij}\otimes \Phi(E_{ij})\in\TC{\mcl{H}_1\otimes\mcl{H}_2}^+$ is countably decomposable separable. Then there exist unit vectors $u_k, v_k\in\mcl{H}_1$ and non-negative scalars $r_k$ such that
 \begin{align}\label{eq-id-Phi-rho-2}
    \sum_{i,j\in\mbb{N}}\sqrt{\lambda_i\lambda_j}E_{ij}\otimes \Phi(E_{ij})
    =\sum_{k\in\mbb{N}}r_k\ranko{u_k}{u_k}\otimes \ranko{v_k}{v_k}.
 \end{align}
 Then for all $x,y\in\mcl{H}_2$
 \begin{align*}
    \sqrt{\lambda_i\lambda_j}\ip{y,\Phi(E_{ij})x}
    &=\ip{e_i\otimes y,(\sum_{s,t}\sqrt{\lambda_s\lambda_t}E_{st}\otimes \Phi(E_{st}))(e_j\otimes x)}\\
    &=\ip{e_i\otimes y, (\sum_{k}r_k\ranko{u_k}{u_k}\otimes \ranko{v_k}{v_k}) (e_j\otimes x)}\\
    &=\sum_{k}r_k\ip{u_k,e_j}\ip{e_i,u_k}\ip{v_k,x}\ip{y,v_k}\\
    &=\ip{y,\Big(\sum_k r_k\ip{u_k,e_j}\ip{e_i,u_k}\ranko{v_k}{v_k}\Big)x}
 \end{align*}
 Therefore,
 \begin{align}\label{eq-Phi(Eij)}
     \Phi(E_{ij})=\frac{1}{\sqrt{\lambda_i\lambda_j}}\sum_kr_k\ip{u_k,e_j}\ip{e_i,u_k}\ranko{v_k}{v_k},\quad\forall~i,j.
 \end{align}
 Let $v=\sum_j\sqrt{\lambda_j}e_j\in\mcl{H}_1$ and $A=\ranko{v}{v}=\sum_{i,j}\sqrt{\lambda_i\lambda_j}E_{ij}\in\TC{\mcl{H}_1}^+$. Applying partial trace to \eqref{eq-id-Phi-rho-1} and \eqref{eq-id-Phi-rho-2} we get 
 \begin{align}\label{partial trace bounded}
   \sum_{k}r_k\ranko{u_k}{u_k}
   &=\sum_{i,j}\sqrt{\lambda_i\lambda_j}E_{ij}\tr\Big( \Phi(E_{ij})\Big) \notag\\
   &=\sum_{i,j}\sqrt{\lambda_i\lambda_j}E_{ij}\tr\Big( E_{ij}\Phi^*(I)\Big) \notag\\
   &=\sum_{ij}\sqrt{\lambda_i\lambda_j}\ip{e_j,\Phi^*(I)e_i}E_{ij} \notag\\ 
   &=A\odot\Phi^*(I)^{\T} \notag\\
   &\leq A\odot\norm{\Phi^*(I)^{\T}} I \notag\\
   &=\norm{\Phi^*(I)}\sum_{j=1}^\infty\lambda_jE_{jj}. 
 \end{align}
 Let $P_n=\sum_{j=1}^n\ranko{e_j}{e_j}$ and $z_{k,n}:=\sum_{i=1}^n\frac{\sqrt{\mu_k}}{\sqrt{\lambda_i}}\ip{u_k,e_i}e_i$ for $k,n\geq 1$.  From \eqref{partial trace bounded}, we get $ \sum_{k=1}^\infty r_k \ranko{P_nu_k}{P_nu_k}\leq\norm{\Phi^*(I)}\sum_{i=1}^n\lambda_iE_{ii}$ so that 
 \begin{align}\label{eq-Fk-tilde}
   \sum_{k=1}^m\ranko{z_{k,n}}{z_{k,n}}
    &=\sum_{k=1}^m\sum_{i,j=1}^n\frac{r_k}{\sqrt{\lambda_i\lambda_j}}\ip{u_k,e_i}\ip{e_j,u_k}E_{ij} \notag \\
    &=\sum_{k=1}^m\sum_{i,j=1}^n\frac{r_k}{\sqrt{\lambda_i\lambda_j}}\ip{e_j,\ranko{u_k}{u_k}e_i}E_{ij} \notag \\
    &=\sum_{i,j=1}^n\sum_{k=1}^m\frac{r_k}{\sqrt{\lambda_i\lambda_j}}\ip{P_ne_j,\ranko{u_k}{u_k}P_ne_i}E_{ij} \notag\\
    &=\sum_{i,j=1}^n\frac{1}{\sqrt{\lambda_i\lambda_j}}\ip{e_j,\sum_{k=1}^mr_k\ranko{P_nu_k}{P_nu_k}e_i}E_{ij}\notag\\
    &\leq\sum_{i,j=1}^n\frac{1}{\sqrt{\lambda_i\lambda_j}}\ip{e_j,\sum_{k=1}^\infty r_k\ranko{P_nu_k}{P_nu_k}e_i}E_{ij}\notag\\
    &\leq \norm{\Phi^*(I)} P_n,\qquad\forall~m,n\geq 1.
 \end{align}
 Since $\ranko{z_{k,n}}{z_{k,n}}\leq \sum_{j=1}^{k+1}\ranko{z_{j,n}}{z_{j,n}}\leq \norm{\Phi^*(I)} P_n$ for all $k\geq 1$ we have
 \begin{align*}
    \norm{z_{k,n}}^2&=\bnorm{\ranko{z_{k,n}}{z_{k,n}}} \leq \norm{\Phi^*(I)},\qquad\forall~n,k\geq 1.
\end{align*}
 Thus $z_{k,n}$ converges, say to  $z_k=\sum_{i=1}^{\infty}\frac{\sqrt{\mu_k}}{\sqrt{\lambda_i}}\ip{u_k,e_i}e_i\in\mcl{H}_1$. Now for every $m,n\geq 1$, from \eqref{eq-Fk-tilde}, we have  
 $$\sum_{k=1}^m\ranko{z_{k,n}}{z_{k,n}}=\sum_{k=1}^m P_n\ranko{z_k}{z_k}P_n\leq \norm{\Phi^*(I)} P_n,$$
 i.e., $ P_n(\norm{\Phi^*(I)} I-\sum_{k=1}^m\ranko{z_k}{z_k})P_n$ is positive. From \cite[Lemma 3.3]{LiDu15}, it follows that $\norm{\Phi^*(I)} I-\sum_{k=1}^m\ranko{z_k}{z_k}$ is positive for every $m\geq 1$. By letting $A_k:=\ranko{v_k}{v_k}\in\B{\mcl{H}_1,\mcl{H}_2}$ we get 
 $\ssum_k A_k^*A_k=\ssum_k\ranko{z_k}{z_k}\leq \norm{\Phi^*(I)} I_{\mcl{H}_1}$. Consider the quantum operation $\Psi:\TC{\mcl{H}_1}\to\TC{\mcl{H}_2}$ defined  by $\Psi(\cdot)=\sum_{k}A_k(\cdot)A_k^*$. Then for every $i,j\geq 1$
 \begin{align*}
     \Psi(E_{ij})
                 =\sum_{k}\ip{z_k,e_i}\ip{e_j,z_k}\ranko{v_k}{v_k}
                 =\sum_{k}\frac{r_k}{\sqrt{\lambda_i\lambda_j}}\ip{e_i,u_k}\ip{u_k,e_j}\ranko{v_k}{v_k}
                 =\Phi(E_{ij})
 \end{align*}
 Now if $T=\sum_{i,j}t_{ij}E_{ij}\in\TC{\mcl{H}_1}$ for some $t_{ij}\in\mbb{C}$, then since $\Phi,\Psi$ are continuous in trace norm we get
 \begin{align*}
     \Phi(T)=\lim_{n\to\infty}\sum_{i,j=1}^nt_{ij}\Phi(E_{ij})
            =\lim_{n\to\infty}\sum_{i,j=1}^nt_{ij}\Psi(E_{ij})
            =\Psi(T).
 \end{align*}
 Thus $\Phi(T)=\sum_{k}A_kTA_k^*$ for all $T\in\TC{\mcl{H}_1}.$\\
 $(iv)\Rightarrow (v)$ Assume that  $\Phi$ has a represenation as in \eqref{eq-Kraus-decomp-SEB}. Let $\Lambda=\{j: A_j\neq 0\}$ and  write $A_j=\ranko{v_j}{u_j}$ for some $u_j\in\mcl{H}_1$ and unit vector $v_j\in\mcl{H}_2$ . For every $j\in\Lambda$, let  $F_j:=A_j^*A_j$ and $R_j:=\ranko{v_j}{v_j}$, then $\ssum_{j\in\Lambda}F_j\in\B{\mcl{H}_1}$ and
 \begin{align*}
     \Phi(T)&=\sum_{j\in\Lambda}A_jTA_j^*\\
            &=\sum_{j\in\Lambda}\ranko{v_j}{u_j}T\ranko{u_j}{v_j}\\
            &=\sum_{j\in\Lambda}\tr(T\ranko{u_j}{u_j})\ranko{v_j}{v_j}\\
            &=\sum_{j\in\Lambda}\tr(TF_j)R_j\\
            &=\sum_{j\in\mbb{N}}\tr(TF_j)R_j,
 \end{align*}
 where $F_j=0$ and $R_j=\ranko{v}{v}$ for all $j\notin\Lambda$ and for some fixed unit vector $v\in\mcl{H}_2$.\\
 $(v)\Rightarrow (i)$ Assume that $\Phi$ has a Holevo form as in \eqref{eq-Holevo-SEB}. Now let $\mcl{K}$ be a separable Hilbert space and let $\rho\in\TC{\mcl{K}\otimes \mcl{H}_1}^+.$ Fixing an orthonormal basis $\{e_j:j\in\Lambda\}$ of $\mcl{K}$ write $\rho=\sum_{i,j\in\Lambda}E_{ij}\otimes \T_{ij}$ for some $T_{ij}\in\TC{\mcl{H}_1}$, where $E_{ij}=\ranko{e_i}{e_j}$. Then
 \begin{align*}
     \id_{\mcl{K}}\otimes\Phi(\rho)&=\sum_{ij}E_{ij}\otimes \Phi(T_{ij})\\
                                   &=\sum_{ij}E_{ij}\otimes \Big(\sum_{k}\tr(T_{ij}F_k)R_k\Big)\\
                                   &=\sum_{ij}\Big(\sum_{k}\tr(T_{ij}F_k)E_{ij}\Big)\otimes R_k\\
                                   &=\sum_{k}\Big(\sum_{ij}\tr(T_{ij}F_k)E_{ij}\Big)\otimes R_k\\
                                   &=\sum_{k}\tr_{\mcl{H}_2}\Big((I_{\mcl{K}}\otimes F_k)\rho\Big)\otimes R_k
 \end{align*}
 Since $\tr_{\mcl{H}_2}\Big((I_{\mcl{K}}\otimes F_k)\rho\Big)$ is positive $(\id_{\mcl{K}}\otimes\Phi)\rho$ is countably decomposable separable. Since $\mcl{K}$ and $\rho$ are arbitrary $\Phi$ is SEB.
 \end{proof}

 \bibliographystyle{alpha}

\end{document}